\documentclass[11pt,a4paper]{article}
\usepackage[utf8]{inputenc}

\usepackage{amssymb}
\usepackage{amsmath}
\usepackage{amsthm}
\usepackage{amsfonts}
\usepackage[left=1in,bottom=1in,top=1in,right=1in]{geometry}
\usepackage[colorlinks=true]{hyperref}

\usepackage[disable]{todonotes}
\presetkeys{todonotes}{inline}{}

\usepackage{csquotes}
\usepackage[boxruled]{algorithm2e}

\usepackage{thmtools,thm-restate}

\makeatletter
\if@todonotes@disabled

\else

\fi
\makeatother

\usepackage{commath}

\theoremstyle{definition}
\newtheorem{definition}{Definition}[section]

\theoremstyle{remark}

\theoremstyle{plain}
\newtheorem{theorem}{Theorem}[section]

\theoremstyle{plain}
\newtheorem{corollary}{Corollary}[theorem]

\theoremstyle{plain}
\newtheorem{lemma}[theorem]{Lemma}

\theoremstyle{plain}

\newtheorem{fact}[theorem]{Fact}


\newcommand{\br}{\mathbb{R}}
\newcommand{\brn}[1][n]{\mathbb{R}^#1}

\newcommand{\ip}[2]{\left\langle #1, #2 \right\rangle}


\title{\textbf{Sampling and Optimization on Convex Sets in Riemannian Manifolds of Non-Negative Curvature}}
\author{Navin Goyal \thanks{Microsoft Research India; Email: navingo@microsoft.com} \and Abhishek Shetty \thanks{Microsoft Research India; Email: ashetty1995@gmail.com}}


\begin{document}
	\maketitle
	\begin{abstract}
		The Euclidean space notion of convex sets (and functions) generalizes to Riemannian manifolds in a natural sense and is called geodesic convexity.
		Extensively studied computational problems such as convex optimization and sampling in convex sets also have meaningful counterparts
		in the manifold setting. Geodesically convex optimization is a well-studied problem with ongoing research and considerable recent interest in machine learning
		and theoretical computer science. In this paper, we study sampling and convex optimization problems over manifolds of non-negative
		curvature proving polynomial running time in the dimension and other relevant parameters. Our algorithms assume a warm start. 
		We first present a random walk based sampling algorithm and then combine it with simulated annealing for solving convex optimization problems. 
		To our knowledge, these are the first algorithms in the general setting of positively curved manifolds with provable polynomial guarantees under reasonable assumptions, and the first study of the connection between sampling and optimization in this setting. 
	\end{abstract}
	
	
	
		\section{Introduction}
	
    \subsection{Motivation}
    
    Convex geometry in the Euclidean space is a well-developed area of mathematics with connections to many fields. 
    Purely from a geometric perspective, convex sets form a rich class of well-behaved sets with many fascinating properties. 
    Convex optimization, which is a mainstay in the theory of optimization with far reaching applications, provides further motivation for the extensive study of convexity.
    The notion of convex relaxation has led to advances in our understanding of approximation algorithms.
    In algorithmic convex geometry, one is interested in constructing algorithms to answer natural questions about convex sets and functions. 
    One important question in the area is sampling uniformly from convex sets and more generally from log-concave distributions. 
    The problem of designing efficient sampling algorithms has led to connections to probability and geometry, raising interesting questions about isoperimetric inequalities and the analysis of Markov operators; see e.g. \cite{vempala2005geometric} and \cite{vempala2010recent} for a survey of the area. 
    
    In the Euclidean space, there is an intimate connection between sampling, volume computation (more generally integration) and optimization.  
    Though polynomial time deterministic volume computation for convex bodies is known to have exponential approximation ratio in the worst case, using efficient sampling, one can construct a fully polynomial time randomized approximation scheme for volume computation \cite{Dyer:1991:RPA:102782.102783}; a long line of follow up work has further improved the time complexity. 
    Similarly, the ability to sample efficiently can be used to design efficient optimization algorithms \cite{bertsimas2004solving, kalai2006simulated}. 
     
    A natural generalization of convexity is the notion of \emph{geodesic convexity} on Riemannian manifolds (see e.g., \cite{MR1326607}). In addition to being motivated from a mathematical perspective, this notion enjoys applications in theoretical computer science and machine learning. 
    Even in convex geometry in the Euclidean space, giving a Riemannian structure to the convex set has been instructive in understanding the behavior of algorithms such as the interior point methods (e.g. see \cite{MR1930942, lee2017geodesic, lee2018convergence}) and is in a sense, a natural way to approach the problem. 
    That said, the theory of algorithmic convex geometry on Riemannian manifolds doesn't seem as well developed as its Euclidean counterpart and it is a natural direction to explore both for its intrinsic interest as well as for applications.     
    In this paper, we ask the following question and take some modest steps towards answering it:
    
     \begin{center} \emph{Is there a theory of algorithmic convex geometry on manifolds? In particular, to what extent do the relations between sampling and optimization that hold in the Euclidean case carry over to manifolds?} \end{center}
    
    In the Euclidean case, most sampling algorithms are based on geometric random walks.
    Examples of these walks include the ball walk, where in each step one samples uniformly from the ball at the current point, rejecting the step if it lands outside the convex set and the Hit-and-Run walk, which in each step samples uniformly from a random chord through the current point. 
    These algorithms are generally analyzed by relating their conductance to isoperimetric properties of the underlying set. 
    A technique from convex geometry that has been instrumental in Euclidean convex geometry is that of localization. 
    Localization is a technique used to prove dimension free inequalities by reducing the high dimensional problem to a one-dimensional problem by decomposing the space.  
    The technique has been used to prove isoperimetric and related inequalities for convex bodies and more generally for log-concave measures and has been useful for the analysis of sampling algorithms. 
    Until recently, the existence of a Riemannian analogue of the technique was unclear due to the lack of symmetries in the manifold setting. \cite{MR3709716} showed that localization technique does indeed extend to the setting of Riemannian manifolds with Ricci curvature lower bounds by exploiting connections to the theory of optimal transport. 
    The connections between optimal transport on manifolds and isoperimetric inequalities had been considered earlier in the context of proving Pr\'{e}kopa--Leindler type inequalities on manifolds with Ricci curvature lower bounds \cite{MR2295207}. 
    See \hyperref[sec:Needle]{Section \ref*{sec:Needle}} for an extended discussion.

%
%
%

    A class of algorithms that relates sampling and optimization in the Euclidean space is the so called cutting plane methods. 
    As an example, consider the center of gravity method which using sampling estimates the center of gravity and then uses a gradient oracle to restrict to a halfspace that contains the minimum. 
    Using the Gr\"{u}nbaum inequality, one can see that each of the two parts of the convex set has a constant fraction of the volume of the convex body and thus each iteration reduces the volume of the set of interest by a constant fraction. It is natural to ask if
    this technique can be extended to manifolds. 
     \cite{rusciano2018riemannian} proves a version of the Gr\"{u}nbaum inequality in the case of Hadamard manifolds, showing that there exist points such that any halfspaces through these points have large fraction of the volume of the set.
    Using this they show an oracle complexity result for optimization, where each oracle call returns a gradient at a center point. 
   It remains unclear whether these methods can be made efficient, due in part to the fact that it is unclear how the required center points can be found efficiently.
    
    Another family of efficient convex optimization algorithms in the Euclidean space is the interior point methods. 
    These algorithms construct barrier functions for the convex sets of interest and use these functions to reduce a constrained optimization problem to an unconstrained optimization problem. 
    As noted earlier, this can be seen as optimizing on a Riemannian manifold with the Hessian structure induced by the barrier function. 
    This connection has been fruitful in constructing optimization algorithms and has also provided motivation for considering optimization on Riemannian manifolds. 
    For optimization on convex sets in Riemannian manifold, one could ask if interior point methods have a natural analog. 
    To our knowledge, there doesn't exist any general construction of this nature. 
    Recently, \cite{Abernethy:2016:FCO:3045390.3045656} show that the interior point methods and simulated annealing techniques are  related in the Euclidean setting: for the universal barrier on the convex body the central path and the heat path of the simulated annealing algorithm coincide suggesting a randomized algorithm for using the barrier using sampling. 
    This connection further motivates studying simulated annealing on manifolds with the hope that this will shed further light on interior point methods in the manifold setting. 
    
    On Riemannian manifolds, there exists a natural notion of uniform sampling analogous to the notion of the Lebesgue measure in the Euclidean space. 
    This is given by the Riemannian volume form (\hyperref[def:vol_form]{Definition \ref*{def:vol_form}}). 
    With this in mind, it makes sense to ask for random walks that have the uniform distribution as the stationary distribution. 
    It is instructive to note that, in general, the natural ball walk on a compact manifold does not have as its stationary distribution the uniform distribution on the manifolds. 
    By the natural ball walk, we mean the walk where at each step we sample uniformly (i.e. from the measure induced on the ball from the natural Riemannian volume form) from the (geodesic) ball of a fixed radius and move to the sampled point. 
    The issue is that, on manifolds, the volume of balls varies with the center and the stationary distribution of the walk is proportional to this volume. 
    In order to get the right distribution as the stationary point, one can use a Metropolis filter but that requires the knowledge of the volume at each point in order to execute the walk. 
    In the setting of manifolds, it is even unclear how to sample uniformly from the ball at each point as required by the ball walk.
    
    In this paper, we consider manifolds with positive sectional curvature. 
    Note that the positivity of the sectional curvature is a stronger constraint than the positivity of the Ricci curvature, since the former implies the latter. 
    Examples of these include the sphere in $ n $ dimensions, the set of orthogonal matrices and Lie groups with bi-invariant metrics. 
    Manifolds of positive curvature have been extensively studied in differential geometry. 
    Their geometry and topology is well understood with characterization theorems such as the Cheeger--Gromoll splitting theorem, Bonnet--Myers theorem and the soul theorem. 
	Their geometry can also be understood using comparison theorems which allow us to relate geometric quantities (such as Jacobi fields) about these manifolds to the corresponding quantities in the Euclidean space (which has zero curvature).
	The notion of convexity on manifolds has also received attention. For example, \cite{MR0339008} proves the non-existence of convex functions on compact manifolds, which requires us to restrict attention to convex subsets of manifolds of our interest. 
    Optimization on these groups has been considered in literature for various applications such as independent component analysis; for example, see \cite{10.1007/978-3-540-30110-3_157}.
   
    
    Sampling and optimization on geodesically convex sets has several applications. 
    For example, recently the geodesically convex optimization on the positive semidefinite matrices has been applied to capacity and operator scaling problems. 
    Closely related is the application of geodesically convex optimization to the Brascamp--Lieb constants \cite{sra_et_al:LIPIcs:2018:9429}. 
    There are many applications of geodesically convex optimization in machine learning and other areas; for example see \cite{zhang2016first} for a discussion of the applications.
    We note however that many of these applications involve manifolds with nonpositive curvature and hence do not fall under our setting.
     
    \subsection{Previous Work}
    
    Literature on sampling algorithms is rich and diverse.
    Developments in the field have led to several deep results in probability and computer science. 
    For sampling from convex sets, many walks such as the grid walk, the ball walk and the Hit-and-Run walk have been analyzed. 
    \cite{Dyer:1991:RPA:102782.102783} provide an approximation scheme for the volume of a convex body using sampling, which further motivated the study of sampling on a convex body. 
    \cite{lovasz1990mixing} and \cite{lovasz1993random} study mixing time of walks such as the ball walk using localization and isoperimetry and these results were further improved by \cite{kannan1995isoperimetric}. 
    \cite{lovasz1999hit} and \cite{lovasz2006hit} analyze the hit-and-run walk and show that it has better mixing properties compared to the ball walk. 
    \cite{lovasz2007geometry} analyze the sampling from log-concave distributions using the ball and the hit-and-run walk. 
    \cite{lee2017geodesic}, \cite{lee2018convergence}, \cite{mangoubi2017rapid}, \cite{NIPS2018_7842} and \cite{lee2018algorithmic} provide further improvements on sampling uniformly from convex sets and from logconcave densities. 
    
    There has been recent interest in sampling on manifolds. 
    \cite{MR3586941} surveys methods for sampling on manifolds with emphasis on statistical applications. 
    \cite{MR2599200} analyzes the spectrum of the kernel of the natural ball walk and of the Metropolis adjusted walk on manifolds. 
    They show that the convergence of both the walks to the stationary distributions in terms of the eigenvalues of the Laplace--Beltrami operator on the manifold. 
    As noted earlier, in the setting of manifolds, the natural ball walk does not converge to the uniform distribution. 
    
    \cite{mangoubi2016rapid} shows that the geodesic walk on manifolds with positive sectional curvature mixes rapidly. Indeed, they show that the walk mixes in time independent of the dimension of the manifold. 
    The metric they consider for comparison with the stationary distribution is the Wasserstein distance, which is the natural choice for their analysis.
    In their analysis, they use the fact that on manifolds of positive curvature, geodesics that are initially ``parallel" tend to move towards each other, a fact captured by the Rauch comparison theorem. 
    As an example of this consider the sphere and two lines of longitude starting at the equator. 
    As we move up towards the north pole, the distance between the end point of the arc reduces till the lines finally intersect at the north pole. 
    This contraction property can easily be converted to statement about average distances, which implies mixing in the Wasserstein metric.
    They use the sampling algorithm thus obtained to sample from the surface of a convex body using a walk whose mixing time does not depend on the ambient dimension. 
    
    \cite{lee2017geodesic} consider the problem of sampling from a Riemannian manifold whose metric is specified by the Hessian of a convex function, with the aim to provide faster sampling times for sampling uniformly from a polytope. 
    The motivation for considering the Hessian structure on a convex set is motivated from the theory of interior point methods where the Riemannian structure induced by the barrier function acts as a natural lens through which to view the optimization technique.
    \cite{lee2017geodesic} show that a walk similar to the geodesic walk that we consider mixes rapidly on Hessian manifolds.
    They use this to obtain a faster algorithm for sampling from a polytope. 
    They state their isoperimetry theorems for Hessian manifolds stated in terms of parameters which are bounded for the particular choice of convex function.    
    \todo[disable]{Note about Hessian manifolds and requirement of positivity, compare to ours}
    
    Another perspective on the geodesic walk is as Hamiltonian dynamics on the manifolds with constant potential. 
    This leads to connection to Hamiltonian Monte Carlo, which is a well-studied technique in Markov chain Monte Carlo first considered in \cite{duane1987hybrid}. 
    \todo[disable]{Rephrase?}
    The Hamiltonian dynamics was original considered in physics as an alternate description of classical mechanics.
    The idea of Hamiltonian Monte Carlo is to sample paths that are solutions of the Hamiltonian equations.
    In Hamiltonian Monte Carlo, the Hamiltonian dynamics is used to produce a Markov chain whose stationary distribution is the required distribution. 
    For a survey of the area, see \cite{MR2858447} and \cite{betancourt2017conceptual}. 
    The Riemannian version of the Hamiltonian Monte Carlo was first considered in \cite{MR2814492}. 
    In a followup work to \cite{lee2017geodesic}, \cite{lee2018convergence} show convergence guarantees for the walk towards the stationary distribution which is set to be a Gibbs distribution on a manifold.
    Here too they show, isoperimetric theorems for Hessian manifolds specified by convex functions which has a convex second derivative. 
    They then apply to adapt the Gaussian cooling algorithm on the Hessian manifold induced by the logarithmic barrier to compute volume on polytopes. 
    
    Though optimization of geodesically convex functions on manifolds is a problem that has been considered for a long time, it remains an active area of research, e.g. see \cite{MR1326607}, \cite{MR1480415} and \cite{MR2364186}. It has received attention recently in data science, machine learning and theoretical computer science. 
    For example, recent work on operator scaling and the Brascamp Lieb inequality considers geodesically convex optimization on the manifold of positive semidefinite matrices; e.g. \cite{allen2018operator}, \cite{sra_et_al:LIPIcs:2018:9429} and \cite{vishnoi2018geodesic}. 
    Optimization techniques based on adapting descent algorithms such as gradient descent, accelerated gradient descent  and Newton's method have been analyzed in the manifold setting.
    For an overview of descent based algorithms, see \cite{zhang2016first}, \cite{zhang2018towards}, \cite{zhang2018cubic} and references therein.
    Though analyses of descent algorithms on manifolds have been considered for some time, for example see \cite{MR1326607}, most results concerned asymptotic convergence. 
    \cite{zhang2016first} consider unconstrained optimization on Hadamard manifolds, which are complete, simply connected manifolds with non-positive sectional curvature, and show bounds on the convergence rates of various first order algorithms such as gradient descent and stochastic gradient descent. 
    \cite{allen2018operator} consider a particular geodesically convex optimization problem on the positive semidefinite cone (seen as a manifold) arising from operator scaling and show exponential rate of convergence of a box-constrained Newton's method using specific properties of the objective function. 
    \cite{zhang2018towards} analyze accelerated methods on convex sets on manifolds but assume that successive iterations remain within the convex set.
    \cite{zhang2018r} analyze an accelerated stochastic gradient based algorithm on Riemannian manifolds.  
    One advantage that these gradient based methods enjoy is that their rates are independent of the dimension. But when doing constrained
    optimization on geodesically convex sets they require ``projection'' to the convex set whose complexity is not clear.
    To our knowledge, techniques for optimization based on sampling have not been analyzed in the setting of Riemannian manifolds. 
    
    \subsection{Our Contributions}
    In this paper, we consider the problem of sampling on geodesically convex sets on manifolds. 
    We provide an algorithm that samples from the Riemannian volume form on geodesically convex sets on manifolds with non-negative lower bounds on the curvature. This corresponds to uniform sampling. More generally, our algorithm can sample from log-concave distributions on geodesically convex sets. 
    We show how to use the sampling algorithm to optimize over geodesically convex sets on manifolds, adapting the simulated annealing algorithm to this setting. 
   
   	Our sampling algorithm has access to the manifold via an oracle for the exponential map and to the convex set via a membership oracle.
   	 We remark that the manifold need not be given extrinsically i.e. by an embedding in an ambient Euclidean space, and our algorithm can work with intrinsic view of manifolds. 
    For sampling on the whole manifold, \cite{mangoubi2016rapid} show results that are similar in spirit to our results. In fact, they show that when we are interested in the whole manifold, the geodesic walk mixes in time independent of the dimension, but with the additional requirement that the (sectional) curvature must be lower bounded away from zero.
    Our results use different techniques, i.e. conductance, to bound the mixing time and require just non-negativity of the curvature.
    When the whole manifold is considered, our techniques give mixing assuming only bounds on the Ricci curvature. 
    Another point to be noted is that \cite{mangoubi2016rapid} show mixing with respect to the Wasserstein metric which on compact metric spaces is weaker than the the total variation metric. 
    But, using standard coupling techniques and bounding the total variation distance between transition kernels of nearby points (as in \autoref{thm:one_step}), bounds on the total variation mixing can be achieved. 
    
    The sampling results of \cite{lee2017geodesic} and \cite{lee2018convergence} are for Hessian manifolds obtained from barrier functions on convex sets in the Euclidean space, while we work on convex subsets of manifolds with positive curvature.
   	They too use conductance to bound the mixing time. 
   	Localization is used to show their conductance results but their results are for Hessian manifolds with conditions on the convex function whose Hessian generates the metric. 
    
    
    To our knowledge, the current paper is the first to explore the connection between sampling and optimization on manifolds. 
    Our algorithm is a zeroth order optimization algorithm requiring access only to an evaluation oracle to the function. 
    \todo[disable]{rephrase next two sentences, spell out Lipschitz and regularity}
    Due to this fact, we only assume that the function is convex and has bounded Lipschitz constant (which can be achieved by assuming bounded first order derivatives) while first order and second order methods or zeroth order methods that estimate the gradient require bounds on higher derivatives. 
    We assume regularity conditions on the convex set by assuming that the convex set contains a ball of radius $ r $ and has small diameter. 
    To our knowledge, no previous algorithm was known for geodesically-convex optimization in a general settings such as ours. The algorithm of \cite{allen2018operator} works
    on the PSD manifold with specific objective function.
    
   
   	We make extensive use of the localization techniques from \cite{MR3709716}.
   	Firstly, we show that the isoperimetric theorem from \cite{kannan1995isoperimetric} has a natural analogue for manifolds with non-negative curvature and use the localization to show facts about the isoperimetric profile of convex sets in these manifolds.
   	Secondly, we use localization to bound the expectation of the function value when sampled from density proportional to its exponential. 
   	This fact is essential for simulated annealing algorithm to return an approximate minimizer of the function. 
   	The techniques used to bound the expectation in the Euclidean case do not seem to carry over immediately to the manifold case. 
   	We use the localization technique in order to reduce the manifold case to the Euclidean case. 
   	Similarly, we use these techniques to relate the distributions for nearby temperatures in the schedule, another essential ingredient for the analysis of the simulated annealing algorithm. 
   	Note that \cite{MR3709716} works with Ricci curvature and thus applies to our setting with assumptions only on the Ricci curvature. 
   	However, to assert the convexity of the set of interior points far away from the boundary of a convex set, we need to use \cite{MR0309010} which relies on the positivity of the sectional curvature. 
    The proof of the theorem uses the Rauch comparison theorem for Jacobi fields, comparing Jacobi fields of manifolds with model manifolds with constant curvature. 
   	This is indeed the only place where we require sectional curvature lower bounds (instead of the Ricci curvature), and thus if we are working with the whole manifold Ricci curvature suffices for the analysis. 
    
	\subsection{Organization of the Paper}
    In \hyperref[sec:preliminaries]{Section \ref*{sec:preliminaries}}, we provide the preliminary definitions and theorems required for the rest of the paper. 
    In \hyperref[sec:algorithms]{Section \ref*{sec:algorithms}}, we present the algorithms for sampling and optimization. 
    In \hyperref[sec:main_results]{Section \ref*{sec:main_results}}, we state the main results regarding the guarantees of the sampling and optimization algorithms.
    In \hyperref[sec:Proof_Sketch]{Section \ref*{sec:Proof_Sketch}}, we provide a sketch of the proofs, going over the key ideas informally. 
    In \hyperref[sec:Needle]{Section \ref*{sec:Needle}}, we discuss needle decomposition on manifolds. 
    In \hyperref[sec:Iso_ineq]{Section \ref*{sec:Iso_ineq}}, we prove an isoperimetric inequality for geodesically strongly convex sets on manifolds with positive curvature and bound the one-step overlap in \hyperref[sec:One_Step]{Section \ref*{sec:One_Step}}. 
   	In \hyperref[sec:Conductance_Bound]{Section \ref*{sec:Conductance_Bound}}, we bound the conductance of the Geodesic walk and thus bound the number of steps needed to sample from a geodesic convex set. 
   	Finally, in \hyperref[sec:Optimization]{Section \ref*{sec:Optimization}}, we discuss the connection between optimization and sampling, bounding the number of iterations of the simulated annealing algorithm in optimizing a geodesically convex function. 
    
	
	\section{Preliminaries} \label{sec:preliminaries}
	   
	   In this section, we briefly note the required preliminary definitions from Riemannian geometry. 
	   For a more detailed account of Riemannian geometry see e.g. \cite{MR2229062} or \cite{nicolaescu2007lectures}. 
	   
	   \begin{definition}[Riemannian Manifold] \label{def:Riem_mani}
	   	Let $ M $ be a smooth manifold and $ TM  $ be its associated tangent space. 
	   	A Riemannian metric $ g $ is a symmetric positive definite $ (0,2) $ tensor field (a smoothly varying inner product at each point) on $ M $. 
	   	The pair $ \left(M,g\right) $ is then said to be a Riemannian manifold.
	   \end{definition}
	   
	   We will assume in the rest of the paper that the manifold is connected. The Riemannian metric naturally provides a way to measure lengths of paths on the manifold. Given the metric $ g_x : T_xM \times T_x M \to \br $, we can define a norm $ \abs{ v }_{x}  =  \sqrt{  g_x\left( v,v \right)  }  $.
	   Given a smooth path $ \gamma : [a,b] \to M $, define its length to be 
	   \begin{equation*}
	   \abs{ \gamma  } = \int_{a}^{b} \abs{ \gamma'(t)  }_{ \gamma(t) } dt . 
	   \end{equation*}
	   
	   We can then use this notion to define a metric space structure on the manifold. 
	   Given any two points $ x $ and $ y $, define 
	   \begin{equation*}
	   d(x,y) = \inf_{ \gamma(0) = x , \gamma(1) = y  } | \gamma| .
	   \end{equation*} 
	   
	   With this metric space structure, we can define the notion of metric balls around points in the manifold. 
	   Denote by $ B(x , a) $ the geodesic ball around $ x $ of radius $ a $. 
	   
	   In order to talk about variations as we move along a manifold, we need a way of talking about variations across different tangent spaces and talk about variations of vector fields relative to one another.
	   This is captured by the notion of an affine connection. 
	   Affine connections on manifolds can be quite arbitrary and need not behave well the additional Riemannian structure.  
	   Given a Riemannian structure, we can define a canonical connection which respects the Riemannian structure. 
	   This is captured by the following definition. 
	   
	   \begin{definition}[Levi-Civita Connection] \label{def:connection}
	   	Let $ \left( M,g  \right) $ be a Riemannian manifold. An affine connection is said to be the Levi-Civita connection if it is torsion-free i.e. 
	   	\begin{equation*}
	   	\nabla_{X} Y - \nabla_{Y}X = \left[X,Y\right]
	   	\end{equation*}
	   	for every pair of vector fields $ X,Y $ on $ M $ and preserves the metric i.e 
	   	\begin{equation*}
	   	\nabla g =0. 
	   	\end{equation*} 
	   	Here $ [X,Y] $ denotes the Lie bracket between two vector fields. 
	   \end{definition}
	   
	   In order to extend the notion of convexity, we would like to generalize the notion of straight lines to the Riemannian case. 
	   We can do this in two ways. 
	   One is to note that in the Euclidean case, the lines are curves that minimize the path length. 
	   The second is to note that a particle traveling along a line at constant speed does not experience any acceleration. 
	   It can be shown that these two notions are indeed equivalent and points to the fact these definitions are indeed natural extensions of lines in the Euclidean space. For a more detailed treatment of these see \cite{nicolaescu2007lectures}, \cite{MR2229062} or \cite{vishnoi2018geodesic}. 
	   
	   \begin{definition}[Geodesic]
	   	Let $ M $ be a smooth manifold with an affine connection (see \autoref{def:connection}) $ \nabla $. A smooth curve $ \gamma : I \to M $ is a said to be a geodesic if 
	   	\begin{equation*}
	   	\nabla _ { \dot { \gamma } } \dot { \gamma } = 0
	   	\end{equation*}
	   	where $ \dot{\gamma} = \frac{d\gamma \left(t\right)}{dt} $. 
	   \end{definition}

	   We next define notions of curvature on manifolds. 
	   
	   \begin{definition}[Riemannian Curvature Tensor]
	   	Let $ \left(M,g\right) $ be a Riemannian manifold with the associated Levi-Civita connection $ \nabla $. 
	   	Define the Riemannian curvature tensor as 
	   	\begin{equation*}
	   	R_M(u,v)w =\nabla_u\nabla_v w - \nabla_v \nabla_u w - \nabla_{[u,v]} w.
	   	\end{equation*}
	   	Here $ u, v $ are vector fields on the manifold. 
	   \end{definition}
	   
	   \begin{definition}[Ricci Curvature]
	   	Let $ \left(M,g\right) $ be a Riemannian manifold with the associated Riemannian curvature tensor $ R $. Then, define the Ricci curvature tensor $ Ric_M $ to be the trace of the linear map 
	   	\begin{equation*}
	   	X \to R(X,Y)Z. 
	   	\end{equation*}
	   	Here $ X , Y, Z $ are vector fields on the manifold. 
	   \end{definition}
	   
	   \begin{definition}[Sectional Curvature]
	   	Let $ \left(M,g\right) $ be a Riemannian manifold with the associated Riemannian curvature tensor $ R $. Then, define the sectional curvature $ Sec_M $ as  
	   	\begin{equation*}
	   	Sec_{M}  \left( u , v \right) = \frac { \langle R \left( u , v \right) v , u \rangle } { \langle u , u \rangle \langle v , v \rangle - \langle u , v \rangle ^ { 2 } }. 
	   	\end{equation*}
	   	where $ u , v $ are linearly independent vectors on the tangent space at a point on the manifold. 
	   	Note that the above definition depends just on the span of the vectors and not on the vectors themselves. 
	   \end{definition}
	   
	   \begin{definition}[Riemannian Volume] \label{def:vol_form}
	   	Let $ \left( M , g  \right) $ be an orientable Riemannian manifold. Then, then there exists a natural volume form on the manifold which is given in local coordinates as 
	   	\begin{equation*}
	   	d \mathrm{Vol} = \sqrt{  \det \left(g\right)  }  dx^{1} \land \dots \land dx^{n}.  
	   	\end{equation*}
	   \end{definition}
	   
	    Next, we begin by extending the notion of convexity to the setting of manifolds. 
	   Since the notion of geodesics is the natural manifold generalization of the Euclidean notion of straight lines, a natural requirement for a set on manifolds to be convex would be to require them to contain the geodesics between every two points in the set. 
	   A technicality that arises with this definition is that in general there could be multiple geodesics between two points. As an example one could consider the sphere, where there are two geodesics joining any two points. 
	   This leads to several notions of geodesic convexity depending on different requirements on the geodesics. 
	   Below we note the various definitions of geodesically convex sets. 
	   These notions vary based on the requirements on the geodesics in the set. 
	   
	   \begin{definition}[Geodesic Convex Set, see \cite{MR2229062}]
	   	Let $ \left(M,g\right)  $ be a complete Riemannian manifold. A subset $ A \subseteq M $ is said to be 
	   	\begin{itemize}
	   		\item Weakly geodesically convex if for each pair $ p, q \in A$, there exists a geodesic $ \gamma \subseteq A $ that is the unique length minimizer in $ A $ connecting $ p $ and $ q $.
	   		\item Geodesically Convex if for each pair $ p, q \in A$, there exists a geodesic $ \gamma \subseteq A$ that is the unique length minimizer in $ M $ connecting $ p $ and $ q $.
	   		\item Strongly geodesically convex if for each pair $ p, q \in A$, there exists a geodesic $ \gamma \subseteq A $ that is the unique length minimizer in $ M $ connecting $ p $ and $ q $ and $ \gamma  $ is the unique geodesic in $ A $ joining $ p $ and $ q $.
	   	\end{itemize}
	   \end{definition}
	   
	    As an example of strongly convex sets on manifolds, consider spherical caps contained in a hemisphere on the sphere in $ n $ dimensions. 
	   In general it can be shown that on manifolds, geodesic balls of small enough radius are strongly convex, as could be expected from intuition. 
	   We will work with the notion of strongly convex sets. 
	   We do not consider the notion of totally convex sets (see \cite{vishnoi2018geodesic}) here, since in the setting of positively curved manifolds, this class is not sufficiently rich. 
	   For example, spherical caps are not totally convex.
	   It can be seen that the notion of strongly convex sets on the Euclidean space corresponds to the usual notion of convex sets, since straight line segments are the unique minimizing geodesics in the Euclidean space.
	   This should not be confused with other notions of strong convexity of sets and functions in the Euclidean space. 
	   
	   	\begin{definition}[Geodesically Convex Function]
	   	Let $ \left(M,g\right) $ be a Riemannian manifold and let $ K \subseteq M $ be a strongly convex subset. 
	   	Then, a function $ f : K \to \br $ is said to be geodesically convex if for every pair $ p, q \in K$, and for any geodesic $ \gamma : \left[0,1\right] \to K $ joining $ p $ and $ q $, we have that $ f \circ \gamma $ is convex, that is, for all $ t \in \left[0,1\right] $,
	   	\begin{equation*}
	   	f\left( \gamma \left(t\right)  \right) \leq \left(1-t\right) f\left( p \right) + t f\left(q\right). 
	   	\end{equation*} 
	   \end{definition}
	   
	   As in the case of the Euclidean space, we can characterize geodesically convex functions in terms of first and second order conditions. 
	   In fact, the characterizations below follow from the Euclidean versions by restricting to the geodesic of interest. 
	   
	   \begin{theorem} [see \cite{MR1326607}]
	   	Let $ \left( M , g  \right) $ be a Riemannian manifold and let $ K \subset M  $ be a strongly convex set. Let $ f : K \to \br $ be a function. 
	   	\begin{itemize}
	   		\item If $ f $ is differentiable, then for the geodesic $ \gamma $ joining points $ p $ and $ q $, we have 
	   		\begin{equation*}
	   		f \left( p \right) + \dot { \gamma } \left ( f \right ) \left ( p \right ) \leq f \left ( q \right )
	   		\end{equation*}
	   		where $ \dot { \gamma } \left ( f \right ) $ denotes the derivative of $ f $ along $ \gamma $. 
	   		\item If $ f $ is twice differentiable, then for the Hessian defined as 
	   		\begin{equation*}
	   		Hess_f \left(X,Y\right) = \ip{ \nabla _ { X } \nabla f } {Y},
	   		\end{equation*}
	   		for two vector fields $ X $ and $ Y $, we have $ Hess_{f} $ is positive definite. 
	   	\end{itemize}
	   \end{theorem}

	   In order to sample from a distribution, we use the general technique of Markov Chain Monte Carlo on continuous state space. 
	   A Markov chain is a stochastic process where the distribution of the next step depends only on the current state and not the history of the process. 
	   Formally, 
	   \begin{definition}[Markov Chain]
	   	Let $ \left( \Omega , \mathcal{F}  \right) $ be a measurable space. 
	   	Let $ Q_0$ be a measure on $ \left( \Omega , \mathcal{F}  \right) $.
	   	A time homogeneous Markov chain is specified by a probability measure $ P_w $ for each $ w \in \Omega $. 
	   	Define the chain by the sequence of measures $ Q_i $, given by 
	   	\begin{align*}
	   	Q_{i+1}\left( A \right) = \int P_{u} \left(A\right) dQ_{i} \left(u\right)
	   	\end{align*}
	   \end{definition}
	   
	   The probability measure $ P_x  $ is said to be the one step of the above chain from the point $ x $. 
	   By slight abuse of notation, we use $ P_x \left( y \right) $ for $ P_x  \left( \{y \}  \right) $, whenever $ y  $ is a point on the manifold. 
	   
	   An important notion in the study of Markov chains is that of a stationary distribution. 
	   A stationary distribution of a Markov chain is a distribution on the state space such that the distribution after one step, starting with this distribution, remains the same. 
	   Formally, 
	   \begin{definition}[Stationary Distribution]
	   	Let $ P_x $ be the transition operator at point $ x $ of a Markov chain on the state space $ \Omega $. 
	   	A measure $ \pi $ on $ \Omega $ is said to be stationary if for every measurable subset $ A $, we have 
	   	\begin{equation*}
	   	\int_{\Omega} P_x \left(  A \right) d\pi\left(x\right) = \pi\left(A\right).
	   	\end{equation*}
	   \end{definition}
	   
	   It can be shown under reasonable assumptions that the distribution of a Markov chain converges to its stationary distribution. 
	   This fact forms the basis for the use of Markov chains in sampling algorithms. 
	   The idea is to set up a chain such that the desired distribution is stationary and run the chain until the distribution is close to the stationary distribution. 
	   Below we note a property that implies stationarity and will also be helpful property when transforming a Markov chain with a certain stationary distribution to a Markov chain with a different stationary distribution. 
	   
	   \begin{definition}
	   	A Markov chain is said to be time-reversible with respect to a measure $ \pi $ if for any two measurable sets $ A, B $ 
	   	\begin{equation*}
	   	\int_B P_u \left(A\right) d\pi\left(u\right) = \int_A P_u \left(B\right) d \pi(u ) . 
	   	\end{equation*}
	   \end{definition}
	   From the above definition, it is clear that if a Markov chain is time-reversible with respect a certain measure then that measure is a stationary distribution for the chain. 
	   
	   One of the primary methods used to bound the mixing time is to bound the conductance of the walk. Intuitively, an impediment to fast mixing is the existence of large sets from which walk has a low chance of leaving, forming a bottleneck.
	   The notion of conductance makes this notion rigorous.
	   It can be shown that the conductance is tightly linked to the spectral gap of a Markov chain, which also controls the rate of mixing. 
	   Cheeger's inequality relates these notions, and is widely used in spectral graph theory. 
	   Cheeger's inequality was in fact originally proven in the setting of differential geometry, relating the Cheeger isoperimetric constant of a manifold to the eigenvalues of the Laplacian on the manifold.  
	   
	   \begin{definition}[Conductance of a Random Walk] \label{def:conductance}
	   	Let $ P_x $ be the transition operator at point $ x  $ of a Markov chain on the state space $ \Omega $. 
	   	Define the ergodic flow with respect to a distribution $ \pi  $ of a set $ A $ to be 
	   	\begin{equation*}
	   	\Phi\left(A\right) = \int_{x \in A} P_{x}\left( \Omega \backslash A   \right) d \pi\left(x\right).
	   	\end{equation*}
	   	Then, the conductance of the walk is defined to be 
	   	\begin{equation*}
	   	\phi = \min_{\pi\left(A\right) \leq 1/2}  \frac{\Phi\left(A\right)}{\pi\left(  A \right)  }.
	   	\end{equation*}
	   	Similarly, for $ 0 \leq s \leq 0.5  $, we can define the $ s $-conductance to be 
	   	\begin{equation*}
	   	\phi_s= \min_{s < \pi\left(A\right) \leq 1/2}  \frac{\Phi\left(A\right)}{\pi\left(  A \right) - s }.
	   	\end{equation*}
	   	For each point $ x \in \Omega $, define the local conductance $ \ell\left(x\right) $ to be the probability of moving away from the point $ x $ i.e. 
	   	\begin{equation*}
	   	\ell\left(x\right) = 1- P_{x} \left(x \right). 
	   	\end{equation*}
	   \end{definition}
	   In order to measure the notion of a good start for the random walk, we use the following standard notions. 
	   %
	   \begin{definition}[Warm start and $L_2$ distance, see \cite{vempala2005geometric}] \label{def:Warm_Start}
	   	Let $ \mu_1 $ and $ \mu_2 $ be two measures with common support. 
	   	\begin{itemize}
	   		\item We say that $ \mu_0 $ is $ H $-warm with respect to $ \mu_1 $ if 
	   		\begin{equation*}
	   		\sup_{A} \frac{\mu_0 \left(A\right)}{\mu_1\left( A\right)} \leq H. 
	   		\end{equation*}
	   		\item We define the $ L_2 $ distance of $ \mu_0 $ with respect to $ \mu_1 $ as 
	   		\begin{equation*}
	   		\norm{ \mu_0 / \mu_1   } = \int \left(  \frac{d \mu_0 }{d \mu_1}  \right)^2 d \mu_1  = \int  \left(\frac{d \mu_0 }{d \mu_1} \right) d \mu_0 . 
	   		\end{equation*}
	   	\end{itemize}
	   	
	   \end{definition}

		\todo[disable]{Add some comments on Geodesically convex sets, examples and closure conditions. 
		Totally convex sets}
		

		\subsection{Oracles} 
		For the problem to be well-posed and useful, we need to specify how the algorithm accesses the manifold. 
		In our setting, we also assume that our algorithm has access to an oracle that takes as input a point on the manifold and real vector $ v $ and outputs $ \exp_x \left( v \right) $. 
		Here, we are associating the tangent space $T_x$ at the point $ x $ with $ \br^n $. 
		In our algorithms we will need access only to uniformly sampled vectors, which are invariant under orthogonal transformations, in the tangent space, hence the choice of the basis of the tangent space does not affect the computation.
		The oracle for the exponential map is common practice in the literature on optimization on manifolds, especially ones using first order methods of optimization, for example see \cite{zhang2018towards}. 
		\cite{mangoubi2016rapid}, \cite{lee2017geodesic} and \cite{lee2018convergence} also present their algorithm  in the presence similar oracles. For the particular manifolds of interest they give methods to construct the required oracle.
		The computation of the exponential map amounts to solving systems of ordinary differential equations, given by the characterizing equations of geodesics. 
		See \cite{lee2017geodesic} for a discussion on the method of construction of the oracle. 
		For natural matrix manifolds the exponential map is well-understood and computationally easier to handle; for example, see \cite{MR2364186}.


	 \section{Stationary Distribution of the Geodesic Walk}
	In this section, we show that the stationary distribution of the geodesic random walks on a convex set on a manifold is the uniform distribution on the convex set. 
	This fact follows from using the well known Liouville Theorem for geodesic dynamics (see \cite{MR2229062}), and was observed in \cite{mangoubi2016rapid} for the case of the unconstrained walk on a manifold of positive curvature.
	
	The geodesic walk on the manifold is also a reversible Markov chain.
	This claim was proved in the more general case of Hamiltonian Monte Carlo in \cite{lee2018convergence}, but we present a proof for completeness. 
	The proof as in the case of the stationarity uses the Liouville theorem. 
	
	\todo[disable]{We have not defined reversibility. In a way it's fine, but given that we are even defining Markov chains we ought to define it and say something about its significance.}
	
	\begin{theorem}
		The geodesic walk is a reversible Markov chain. 
	\end{theorem}
	\begin{proof}
		We show this in the case of unrestricted (i.e., without a metropolis filter to restrict the walk to a convex subset) geodesic walk in compact manifolds; the reversibility of the restricted version then follows from the analysis of the Metropolis filter, for example see Lemma 1.9 in \cite{lovasz1993random}.  \todo[disable]{doesn't that last claim need some more justification?}
		In order to show the reversibility, it suffices to show that the transition operator of the walk is self-adjoint. 
		Again, we restrict to the case of fixed step length. 
		The general case where the step lengths are picked independently in each step is similar. \todo[disable]{In the previous sentence it's not clear what the general case means. In the following we should say what f and g are.}
		Thus consider 
		\begin{equation*}
		\int_{x \in M } \int_{ u \in S_x }  f\left( \exp_x \left( \delta u \right)    \right)  g(x)  \; d \mu_x(u) d \mathrm{Vol} \left(x\right)  
		\end{equation*}
		where $ \mu_x $ is the uniform distribution on the unit sphere $S_x$ in the tangent space $T_x$.   
		Let us consider the above integral as an integral over the Liouville  measure $ \mathcal{L} $. 
		\begin{equation*}
		\int f\left( \exp_x \left( \delta u \right)    \right) g(x) \; d\mathcal{L} \left(x,u\right) . 
		\end{equation*}
		Now note that the joint distribution of $ \left(   \left(x,u \right) ,   \left( \exp_x\left( \delta u  \right) , \bar{u} \right)   \right) $ is the same as $  \left(   \left(\exp_y \left( - v   \right) , \bar{v} \right),  \left( y , - v \right)   \right)  $, where $ y $ is drawn from the uniform measure on the manifold and $ v $ is drawn from the unit sphere on the tangent space $ T_{y} $. \todo[disable]{$T_y$?} 
		Here $ \bar{u} $ denotes the derivative of the geodesic at the point $  \exp_x\left( \delta u  \right) $. 
		Making the above change of variables, we get the required theorem.
		To see this note that $  \left(\exp_y \left( - v   \right) , -\bar{v} \right)$ is obtained by applying the geodesic flow to $ \left( y , -v  \right) $. 
		From Liouville theorem and the symmetry of the uniform distribution on the unit sphere, we get that this has the same distribution as $ \left( x , u\right) $. 
		The required claim now follows by change of variables. 
	\end{proof}
	
	With the reversibility given above, we see that the stationary distribution of the geodesic walk is the uniform measure as required. 
	We note this in the following theorem. \todo[disable]{Why is the following theorem being credited to [MS]? They don't talk about restrictions to g-convex sets.}
	
	\begin{theorem}
		The stationary distribution of the geodesic walk on a geodesically convex set $ K $ is the uniform distribution on $ K $. 
	\end{theorem}

    \section{Algorithms} \label{sec:algorithms}
    
    In this section, we first describe an algorithm to sample from a convex set on a manifold, given access to the membership oracle for the convex set $ K $. 
    We consider a Markov chain whose stationary distribution is the uniform distribution on the convex set. 
    The algorithm in each step samples from the ball uniformly on the tangent space at the current point and moves along the induced geodesic on the manifold for a length specified by the step length. 
    If the geodesic takes us out of the convex set, then we reject the step and resample from the tangent space.
    
    \begin{algorithm}[H]
    	\SetAlgoLined
        \caption{Geodesic Walk on a Manifold $ M $}
        \label{alg:GeoWalk}
        \DontPrintSemicolon
        \KwIn{Dimension $ n $, Convex set $ K $, Starting Point $ X_0 \in K $, Step Size $ \delta $, Number of Steps $ N $.}
        
        \For{$ \tau \leq N $}{
             Pick $ u_{\tau+1} \leftarrow N\left( 0 ,I   \right)  $ in $ T_{X_{\tau}}  M $. \;
             \eIf{$  \exp_{X_{\tau}}  \left( \delta u_{\tau+1}  \right) \in K $}{
                 Set $ X_{\tau+1} \leftarrow \exp_{X_{\tau}}  \left( \delta u_{\tau+1}  \right) $. \;
             }{
              Set $ X_{\tau+1} \leftarrow X_{\tau} $. \;
             }
        }
        \KwOut{Point $ X_N$ approximately uniformly sampled from $K$.} 
    \end{algorithm}

    We work with Riemannian manifolds that are complete as metric spaces. The Hopf--Rinow theorem asserts that for such manifolds the
    exponential map is defined on the whole tangent space. Thus all the oracle calls in \hyperref[alg:GeoWalk]{Algorithm \ref*{alg:GeoWalk}} are well-defined. 
    
    
    Variations of the walk above have been considered in \cite{lee2017geodesic}, \cite{lee2018convergence} and \cite{mangoubi2016rapid}. 
    Hamiltonian walk with the Hamiltonian given by $ H( x, v  ) = \frac{1}{2} \ip{v}{v}_{g^{-1}\left(x\right)}$ leads to a variation of the geodesic walk because of the well known fact that the geodesic flow is the Hamiltonian flow for the above Hamiltonian. 
   Here $ g $ refers to the Riemannian metric as defined in \hyperref[def:Riem_mani]{Definition \ref*{def:Riem_mani}}. 
   Note that executing the geodesic walk needs oracle access to the exponential map and a membership oracle to the convex set. 
   
   Towards the goal of optimizing convex functions, we adapt the simulated annealing algorithm from the Euclidean setting to the Riemannian setting. 
   Given a function $ f $ and a ``temperature" $ T $, define the probability density $ \pi_{f,T} \sim  e^{ -\frac{f}{T}  }$.
   Intuitively, the function puts more weight on points where the function attains small values and sampling from the distribution is likely to output points near the minima of the function $ f $ for low enough temperature $ T $. 
   The issue is that sampling from the distribution for a low enough temperature is a priori as hard as solving the initial optimization problem. 
   The way to get around this issue is to set a temperature schedule in which one progressively ``cools" the distribution such that the sample from the previous temperature acts to make it easier to sample from the next temperature. 
   Once we reach a low enough temperature, the sample we attain will be close to the optimum point with high probability. 
   
   Since we need to sample from a distribution proportional to the $ e^{-\frac{f}{T}}  $, the natural idea would be to use the Metropolis filter with respect to the original uniform sampling algorithm. 
   This leads to the following algorithm for sampling from the required distribution.

   
    \begin{algorithm}[H]
       \SetAlgoLined
       \caption{Adapted Geodesic Walk on a Manifold $ M $ and function $ f $.}
       \label{alg:Geowalk_LC1}
       \DontPrintSemicolon
       \KwIn{Dimension $ n $, Convex set $ K $, Convex function $ f : K \to \br$, Starting Point $ X_0 \in K $, Step Size $ \eta $, Number of Steps $ N $.}
       \For{$ \tau \leq N $}{
           Pick $ u_{\tau+1} \leftarrow N\left( 0 ,I   \right)  $ in $ T_{X_{\tau}}  M $. \;
           \eIf{$ y = \exp_{X_{\tau}}  \left( \eta \, u_{\tau+1}  \right) \in K $}{
               With probability $ \min \left( 1 , e^{-f( y  ) + f(X_{\tau})  } \right) $, set $ X_{\tau+1} \leftarrow y  $. \;
               With the remaining probability, set $  X_{\tau+1} \leftarrow X_{\tau} $. \;
           }{
               Set $ X_{\tau+1} \leftarrow X_{\tau} $. \;
           }
       }
       \KwOut{Point $ X_N \in K $ sampled approximately proportional to $ e^{-f} $.}
   \end{algorithm}
   
    Given the algorithm for sampling at fixed temperature, we adapt the simulated annealing algorithm for the case of positively curved manifolds. 
   The sequence of temperatures that the optimization takes is called the temperature schedule or the cooling schedule. 
   
   \begin{algorithm}[H]
       \SetAlgoLined
       \caption{Simulated Annealing on Manifold $ M $.}
       \label{alg:Optimization1}
       \DontPrintSemicolon
       \KwIn{Dimension $ n $, Convex set $ K $, Convex function $ f : K \to \br $, Starting Point $ X_0 \in K $, Number of Iterations $ N $, Temperature Schedule $ T_i $.} 
       \For{$ \tau \leq N $}{
           Sample $ X_{\tau }  $ according to distribution $ \pi_{f, T_{\tau}} $  using \hyperref[alg:Geowalk_LC1]{Algorithm \ref*{alg:Geowalk_LC1}} for  $ \pi_{f, T_{\tau}} $ with starting point $ X_{\tau -1} $. \;
       }
       \KwOut{Point $ X_N \in K$ that approximately minimizes $ f $.}
   \end{algorithm}


   The main thing to specify in the design of the algorithm above is the temperature schedule, that is, the sequence of temperatures $ T_i $ from which we sample. 
   The aim is to set the schedule such that each temperature is close enough to such that the distributions are similar, while still maintaining a small number of temperature updates. 
   Following \cite{kalai2006simulated}, we show that 
   \begin{equation*}
   T_{i+1} = \left(1 - \frac{1}{\sqrt{n}}\right) T_i
   \end{equation*}
   provides us with the required guarantees. 
   We set the initial temperature $ T_0 $ such that the uniform distribution on convex set is close in the $ L_2 $ norm to the initial distribution.
   Note that setting  $ T_0 = \max_{x} f(x)$ satisfies this requirement. 
   Since the function is $ L $-Lipschitz, we can bound this by $ T_0 \leq \min_{x} f(x) + LD $ where $ D $ is the diameter of the convex set. 
   
   \section{Main Results} \label{sec:main_results}
   In this section, we discuss the main results of the paper. 
   The first result is regarding sampling from a convex set in manifolds with positive curvature. 
	\todo[disable]{Make notation about Riemann tensor clear}
   \begin{restatable}{theorem}{sampling}[Mixing Time Bound] \label{thm:sampling}
		Let $ \left( M , g  \right) $ be an $ n $-dimensional Riemannian manifold and $ K \subset M $ be a strongly convex subset satisfying the following conditions:
		\begin{itemize}
			\item $ M $ has non-negative sectional curvature i.e. $ Sec_{M} \geq 0 $.
			\item The Riemannian curvature tensor is upper bounded i.e. $ \max \norm{R_M}_{F}  \leq R $. 
			\item $ K $ contains a ball of radius $ r $. 
			\item $ K $ has diameter $ D $. 
		\end{itemize}
		Let the starting distribution be an $ H $-warm start (see \hyperref[def:Warm_Start]{Definition~\ref*{def:Warm_Start}}). Then, in  
		\begin{equation*}
		t = O \left( \frac{H^2 D^2 n^3 \left( R+1\right)   }{r^2 \epsilon^2} \log \left(  \frac{H}{\epsilon}  \right) \right)
		\end{equation*}
		steps, the distribution of $ X_t $, the output of \hyperref[alg:GeoWalk]{Algorithm~\ref*{alg:GeoWalk}} is $ \epsilon $-close to the uniform distribution on convex set in total variation. 
   \end{restatable}
   
   From the definition, we get that $ \abs{Sec_{M} } \leq Rn^{-1} $ implies $ \norm{R_M}_F  \leq R  $. So, the above theorem can be stated in terms of sectional curvature as well. 
    See \hyperref[sec:Conductance_Bound]{Section \ref*{sec:Conductance_Bound}} for the proof of the above theorem.
   We next state the theorem regarding the optimization of convex functions on convex sets. 
   

   \todo[disable]{Restate the theorem with correct references.}
   
   \begin{restatable}{theorem}{simulatedannealing}[Simulated Annealing] \label{thm:simulated_annealing}
   		Let $ \left( M,g\right) $ be a manifold with non-negative Ricci curvature. Let $ K \subseteq M $ be a strongly convex set satisfying the requirements in \hyperref[thm:sampling]{Theorem~\ref*{thm:sampling}}. Let $ f : K \to \br $ be a geodesically convex function with Lipschitz constant $ L $. Then, starting from a uniform sample from $ K $, \hyperref[alg:Optimization1]{Algorithm~\ref*{alg:Optimization1}} runs for 
   		\begin{equation*}
   		O \left( \frac{D^2 n^{7.5} \left(R+1\right)  L^2}{r^2 \epsilon^2 \delta^6} \log \left( \frac{n}{\delta} \log\left( \frac{T_0 \left(n+1\right) }{\epsilon \delta}   \right)        \right)    \log^5\left( \frac{T_0 n }{\epsilon \delta}   \right)    \right) 
   		\end{equation*}
   		steps and with probability $ 1 - \delta $ outputs a point $ x^{*} $ such that 
   		\begin{equation*}
   		f\left( x^{*}  \right) - \min_{x} f(x) \leq \epsilon.  
   		\end{equation*}
   \end{restatable}
The requirement to start with the uniform distribution is made for simplicity and can work with a warm start.
    \hyperref[sec:Optimization]{Section \ref*{sec:Optimization}} contains a proof of the above theorem.
   Note that the dependence on the probability of error can be reduced from polynomial to logarithmic using standard error reduction 
   by running independent trials of the algorithm and outputting the minimum value across the trials.  
   
    \section{Proof Sketch} \label{sec:Proof_Sketch}
    The dominant strategy for showing that geometric Markov chains mix fast in convex bodies in the Euclidean space goes via the notion of conductance of Markov chain (\hyperref[def:conductance]{Definition \ref*{def:conductance}}); see e.g. \cite{vempala2005geometric}. Intuitively, the conductance of a Markov chain is a measure of how well-connected the Markov chain is: small conductance implies that there are sets where the walk can get stuck and takes a long time to get out, somewhat like the barbell; large conductance implies that such bottlenecks do not exist.
    We will adapt this strategy to the manifold setting and show that the conductance of our walk is large, implying that the walk mixes fast. 
    Lower bounding the conductance, as in the case of the Euclidean space, reduces to showing isoperimetric inequalities and bounding the overlap of one-step distributions of the chain. 
    The key tool here is the localization lemma \cite{lovasz1990mixing, kannan1995isoperimetric}.
    It is useful for proving many geometric inequalities and in particular isoperimetric properties of convex sets in the Euclidean space.
Recently, the classical localization lemma which applies to sets in $\brn$ was extended to Riemannian manifolds~\cite{MR3709716} to prove isoperimetry theorems on manifolds.  
    
    \begin{theorem}[Informal statement of \autoref{thm:isoperimetry}] \label{thm:isoperimetry_informal}
        Given a partition of a strongly geodesically convex set in a Riemannian manifold (with non-negative Ricci curvature) into three subsets $K_1$, $K_2$ and $ K_3 $ such that $ K_1 $ and $ K_3 $ are well separated, then the volume of $ K_2 $ is large compared to the volume of the minimum of the two separated sets.  
    \end{theorem}
    
    The geodesic walk on the convex set is geometric in nature and thus its conductance is tightly linked with the isoperimetry of the base space. To prove the rapid mixing of the geodesic random walk, we first show:
    
    \begin{theorem}[Informal statement of \autoref{thm:one_step}] \label{thm:one_step_informal}
        For two nearby points on the convex set, the one-step distributions of the geodesic walk from these points are close in total variation distance. 
    \end{theorem}
    
    Next, for any subset $S_1$ of the strongly geodesically convex set $K$ (and its complement in $K$, namely $S_2$), we consider a subset $S'_1$ from which the geodesic walk is unlikely to escape. 
    Using \autoref{thm:one_step_informal}, the aim is to show that $S'_1$ and the corresponding set $S'_2$ in the complement are well separated, thus reducing the conductance to the isoperimetry claim of \autoref{thm:isoperimetry_informal}. One issue with this approach is that there could be a point in the set with very low probability of leaving that point. 
    To deal with this, we restrict to the subset of the convex set which only consists of points with high probability of taking a step away from the current point. 
    Using a theorem of Cheeger and Gromoll it can be shown that this subset is convex. Moreover, we show with another use of localization that it has volume comparable to that of the original set.
    
    \begin{theorem}[Informal statement of \autoref{thm:rev_iso}]
        The set of points with high local conductance is a convex subset of the original convex set and large fraction of the size of the original set. 
    \end{theorem}
    
    Using the above, we can show that the conductance of the walk is high, thus showing that the above Markov chain mixes rapidly. 
    Running the Markov chain for sufficiently many steps, gives us a distribution that is close in total variation distance to the stationary distribution, which in this case is the uniform distribution as required. 
    
    \begin{theorem}[Informal statement of \autoref{thm:sampling}]
        Given a warm start, \hyperref[alg:GeoWalk]{Algorithm~\ref*{alg:GeoWalk}} outputs a point that is approximately sampled from the uniform distribution on a geodesically convex set in time polynomial in the dimension. 
    \end{theorem}
    
    Towards showing the reduction from sampling to optimization on manifolds, we adapt the algorithm from \cite{kalai2006simulated}
    for the Euclidean setting.
    We first show that we can sample from the required distributions for each temperature by adapting the algorithm to the case of sampling from the required log-concave distributions. 
    
    \begin{theorem}[Informal statement of \autoref{thm:sampling_logconcave}]
        Given a starting distribution that is close in the $ L_2 $ distance, \hyperref[alg:Geowalk_LC1]{Algorithm~\ref*{alg:Geowalk_LC1}} outputs a point that is approximately sampled from a log-concave distribution in polynomial time. 
    \end{theorem}
    
    One then needs to pick a temperature schedule satisfying two seemingly opposite requirements. Firstly, we require that the schedule makes large enough updates so as to not require too many samples in order to reach the required low temperature. 
    Secondly, we require that the temperatures are close enough that the two distributions we get for these temperatures are close by in $ L_2  $ distance. 
    We pick a temperature schedule that multiplicatively updates the temperature in each iteration i.e. we pick $ T_{i+1} =  \left( 1 - n^{-0.5}  \right)   T_i    $.
    This ensures that the temperatures are rapidly decreasing. 
    But, it remains to be shown that the temperatures are indeed close by in $ L_{2} $ distance. 
    The Euclidean version of this theorem is usually shown by constructing an auxiliary function whose log-concavity given the bound. 
    The log-concavity is established using the fact that the marginals of log-concave measure is log-concave. 
    The above strategy does not generalize to the manifold setting. 
    We use localization directly to show the required inequality by reducing to the Euclidean case. 
    
    \begin{theorem}[Informal statement of \autoref{thm:adjacent_dist}]
        For two adjacent temperatures in the schedule, the two distributions are close in $L_2$ distance. 
    \end{theorem}
    
    Once we can sample from the distribution for each temperature, we need to then show that for low temperatures we get points near the optimal with good probability. 
    The equivalent result in the Euclidean case is shown by reducing general convex functions to linear functions and for linear functions reducing to the case where the convex set is a cone. 
    Again, this approach doesn't seem to generalize to the setting of manifolds. So, we use the localization technique using a more general version stated with respect to a guiding function. 
    The distance from the minimum is used as the guiding function, leading to needles that pass through the minimum of the function, and then we can reduce to the inequality shown in \cite{kalai2006simulated}. 
    
    \begin{theorem}[Informal statement of \autoref{thm:low_temperature_expectation}]
        For small enough temperature, the expected value of the function sampled from the distribution is close to the minimum value of the function. 
    \end{theorem}
    
    Given the above theorems, we can show the guarantees of the algorithm for optimizing convex functions on strongly convex sets on manifolds. 
    
    \begin{theorem}[Informal statement of \autoref{thm:simulated_annealing}]
        Given a geodesically convex function $ f $ on a geodesically convex set, \hyperref[alg:Optimization1]{Algorithm \ref*{alg:Optimization1}} finds a point that is $ \epsilon $-close to the optimal point running in time polynomial in the dimension and $ \epsilon^{-1} $. 
    \end{theorem}
    
       \section{Needle Decomposition on Riemannian Manifolds} \label{sec:Needle}
    We first recall the localization lemma for the Euclidean space mentioned earlier.
    
    \begin{theorem}[Localization Lemma, \cite{lovasz1990mixing, kannan1995isoperimetric}]
    	Let $ g $ and $ h $ be real-valued lower semi-continuous measurable function on $ \br^n $ satisfying 
    	\begin{align*}
    	\int_{\brn } g(x) dx > 0
    	\intertext{and} 
    	\int_{\brn } h(x) dx > 0.
    	\end{align*}
    	Then, there exists a linear function $ \omega : \left[ 0,1 \right] \to \br_+  $ and two points $ a , b  \in \brn$ such that 
    	\begin{align*}
    	\int_{0}^{1} \omega^{n-1} (t)   g\left( a t + (1 - t) b  \right) dt > 0 
    	\intertext{and}
    	\int_{0}^{1} \omega^{n-1} (t)   h\left( a t + (1 - t) b  \right) dt > 0 . 
    	\end{align*}
    \end{theorem}
    
    The proof of the above theorem follows the strategy of repeated bisection.
    We find a halfspace for which both the given inequalities are satisfied. 
    \todo[disable]{We find a hyperplane or halfspace? Both sides or one side?} 
    Taking this to the limit, we are left with one dimensional interval such that proving the inequality on the original convex set  reduces to proving the inequality on the interval with respect to the measure we get from the limiting process. 
    See \cite{MR3709716} for an overview of the development of the localization technique. 
    
    In the Riemannian setting, it is not immediately clear what the right analogue of this bisection argument is.
    In a recent work \cite{MR3709716}, a Riemannian analogue of the localization lemma was proven by invoking connections to optimal transport. This localization lemma produces a disintegration of measure for the volume form on the manifold into measures $ \mu_I $ supported on geodesic sections such that the measures $ \mu_I $ are pushforwards of measures on the real line satisfying the curvature dimension condition on the interval of support. 
    \todo[disable]{We need to comment on if these are different from what Klartag proves?} 
    
    We next define the notion of curvature-dimension conditions on a weighted manifold, which shall be central in the development of the localization technique over Riemannian manifolds; see \cite{MR3709716} for more details. \todo[disable]{What is $\rho$ in the following? We should say a bit more about it and any conditions that it might need to satisfy. I think we are missing quantifications on $N, \kappa$ etc.}
    
    \begin{definition}[Curvature-Dimension Condition]
    	Let $ \left( M , d , \mu  \right) $ be a $ n $-dimensional weighted Riemannian manifold and let $ \rho: M \to \br $ be a smooth function such that the weight measure has density $ e^{-\rho} $ with respect to the Riemannian volume measure. Let the generalized Ricci curvature with parameter $ N \in \left( - \infty ,1  \right) \cup \left[ n , + \infty   \right] $ be defined through the equation 
    	\begin{equation*}
    	Ric_{\mu, N} \left( v ,v   \right) = \begin{cases}
    	Ric_{M} \left(v,v\right) + Hess_{\rho} \left(v,v\right) - \frac{\left( \partial_v \rho  \right)^2}{N-n} & N \neq \infty, n \\
    	
    	Ric_{M} \left(v,v\right) + Hess_{\rho} \left(v,v\right) & N = \infty \\
    	Ric_{M} \left(v,v\right)  & N = n
    	\end{cases}
    	\end{equation*} 
    	for every $ x \in M $ and tangent vector $ v \in T_x M $. 
    	For $ \kappa \in \br $, we say that the weighted manifold satisfies the curvature-dimension condition $ CD\left( \kappa , N \right) $ if for  every $ x \in M $ and tangent vector $ v \in T_x M $, we have 
    	\begin{equation*}
    	Ric_{\mu , N }  \left(  v , v  \right)  \geq \kappa \cdot g(v,v) . 
    	\end{equation*}
    \end{definition}
    
    The condition defined above comes up naturally in the study of diffusion operators. The notion is used to prove hypercontractivity inequalities and logarithmic Sobolev inequalities. For more information about the connections to diffusion operators, see \cite{MR3155209}.
    This condition has also been considered in optimal transport in terms of displacement interpolation in the space of probability measures. 
    For discussion on this condition and the relationship to the above curvature dimension condition, see \cite{MR3298475}.

    When $n$ is positive it can be interpreted as a generalized upper bound on the dimension for weighted manifold and $\kappa$ can be interpreted as a generalized lower bound on the curvature of the weighted manifold.
    Manifolds in this paper always have non-negative sectional curvature and satisfy $\kappa \geq 0$. We make extensive use of 
    localization for manifolds satisfying $CD(0,N)$ condition with $N=n$ or $\infty$; for example in Theorems
    \ref{thm:isoperimetry}, \ref{thm:rev_iso}, \ref{thm:isoperimetry_lc}.
    
    \todo[disable]{In the following definition, one has to say what kind of object $\eta$ is when first mentioning it. Klartag does it in his definition of needle. We are also missing quantifiers on $\kappa, N$.}
    
    \begin{definition}[Needle, \cite{MR3709716}]
    	Let $\left( M , d , \mu  \right)  $ be a weighted Riemannian manifold satisfying the $ CD\left( \kappa ,  N  \right) $ condition for some $ \kappa \in \br $ and $ N \in \left( - \infty ,1  \right) \cup \left[ n , + \infty   \right] $.
    	Let $ \eta $ be a measure on $ M $. 
    	We say that $ \eta  $ is a $ CD(\kappa, N) $ needle if there exists an open interval $ A  \subset \br$, a smooth function $ \Psi : A \to \br  $ and a minimizing geodesic $ \gamma : A \to M $ such that 
    	\begin{enumerate}
    		\item $ \eta $ is the pushforward of the density on $ A $ that is proportional to $ e^{- \Psi } $ under $ \gamma $ i.e. for any measurable subset $ B $, $ \eta\left(B\right) = \theta\left( \gamma^{-1} B  \right) $, where $ \theta \propto e^{-\Psi} $. 
    		\item For every $ x \in A $, we have 
    		\begin{equation*}
    		\Psi'' \left(x \right) \geq  \kappa + \frac{  \left( \Psi' \left(x\right)  \right)^ 2}{N-1} . 
    		\end{equation*}
    		For the case of $ N = \infty $, we intepret the second term on the right hand side to be zero.
    	\end{enumerate}
    	If the above inequality is an equality, then the measure is said to be a $ CD\left( \kappa , N \right) $ affine needle. 
    \end{definition}
    Note that the needle condition is equivalent to the $ CD(\kappa,N) $ condition on the preimage interval. 
    As we shall see later, the geodesics play the role of the lines that appear in the Euclidean case. \todo[disable]{Note to self: check if we do this.} 
    \todo[disable]{We have nowhere said that our manifolds are allowed to have boundary and g-convex sets are submanifolds with boundary, in particular in the following theorem. This is important as I think normally one thinks of manifolds without boundary and then Yau says that we cannot have convex manifolds without boundary. This point can be made in the intro around where Yau's theorem is mentioned.}	
    
    
    \begin{theorem}[Needle Decomposition, \cite{MR3709716}] \label{thm:Klartag}
    	Let $ n \geq 2  $, $ \kappa \in \br $ and $ N \in \left( - \infty, 1 \right) \cup \left[ n , \infty   \right] $. 
    	Let $ \left( M , d , \mu  \right) $ be a weighted complete Riemannian manifold satisfying the $ CD\left( \kappa, N  \right) $ condition. 
    	Consider an integrable function $ f :  M \to \br  $ satisfying $ \int_{M} f d \mu = 0 $ and $ \int_M \abs{f(x)} d( x , x_0  )  d\mu   < \infty  $ for some $ x_0 $. 
    	Then, there exists a partition $ \Omega  $ of $ M $ and a measure $ \nu  $ on $ \Omega  $ and a family of measure $ \left\{ \mu_i  \right\}_{i \in \Omega} $ such that 
    	\begin{enumerate}
    		\item For any measurable set $ A $, 
    		\begin{equation*}
    		\mu\left( A\right) = \int_{\Omega}  \mu_i \left( A\right) d \nu(i).
    		\end{equation*}
    		This can be seen as a disintegration of measure.
    		\item For almost every $ i \in \Omega $, $ i $ is an image of a minimizing geodesic and $ \mu_i $ is supported on $ i $. Furthermore, for almost every $ i$, either $ i $ is a singleton or $ \mu_i $ is a $ CD( \kappa, N) $ needle. 
    		\item For almost every $ i $, 
    		\begin{equation*}
    		\int_i f d \mu_i = 0.
    		\end{equation*}
    	\end{enumerate}
    \end{theorem}
    

    A corollary of the above theorem is the Riemannian analogue of the four function theorem from \cite{kannan1995isoperimetric}. 
    This version of the theorem is easier to use in our setting and will be the application of the needle decomposition to be 
    used in our setting. 
    \todo[disable]{I think the manifold is supposed to be CD(k,n) below.}
    
    \begin{corollary}[Four Function Theorem, \cite{MR3709716}] \label{cor:four_func}
    	Let $ n \geq 2  $, $ \kappa \in \br $ and $ N \in \left( - \infty, 1 \right) \cup \left[ n , \infty   \right] $. 
    	Let $ \left( M , d , \mu  \right) $ be a $ n $-dimensional weighted complete Riemannian manifold satisfying the $ CD\left( \kappa , N  \right) $ condition. 
    	For $ i = 1,2,3,4 $, consider  integrable functions $ f_i :  M \to \br^{ \geq 0 }$ satisfying  $ \int_M \abs{f_i(x)} d( x , x_0  )  d\mu   < \infty  $ for some $ x_0 $. 
    	Assume that there are constants $ \alpha , \beta  $ such that $ f_1^{\alpha} f_2^{\beta} \leq  f_3^{\alpha} f_4^{\beta}  $ almost everywhere. 
    	If for every $ CD(  \kappa , N) $ needle $ \eta $ (for which the functions are integrable), the following is satisfied 
    	\begin{equation*}
    	\left(\int f_1 d \eta \right)^{\alpha} \left(\int f_2 d \eta \right)^{\beta} \leq \left(\int f_3 d \eta \right)^{\alpha} \left(\int f_4 d \eta \right)^{\beta},
    	\end{equation*} 
    	then, 
    	\begin{equation*}
    	\left(\int f_1 d \mu \right)^{\alpha} \left(\int f_2 d \mu \right)^{\beta} \leq \left(\int f_3 d \mu \right)^{\alpha} \left(\int f_4 d \mu \right)^{\beta}.
    	\end{equation*}
    \end{corollary}
    
    In the case of $ N = \infty $ or $ \kappa = 0 $, we can further simplify the above theorem by decomposing the one-dimensional needles into simpler ``affine" needles. 
    This reduces showing integral inequalities on manifolds to showing inequalities on the real line involving a small number of real parameters. 
    
    \todo[disable]{I also don't quite see why this is a corollary. This seems to further assume $n=1$ Is that $N+1$ or $N$ in $ CD( \kappa, N +1  ) $ below?.}
    
    \begin{lemma}[Reduction to $ N = \infty $ or $ \kappa =0 $ affine case, \cite{MR3709716}] \label{cor:affine_needle}
    	Let $ \mu  $ be a $ CD( \kappa, N +1  ) $ needle on $ \br $ for $ \kappa =0 $ or $ N = \infty $.
    	Consider an integrable, continuous function $ f :  \br \to \br  $ satisfying $ \int_{M} f d \mu = 0 $. 
    	Then, there exists a partition $ \Omega  $ of $ \br $ and a measure $ \nu  $ on $ \Omega  $ and a family of measure $ \left\{ \mu_i  \right\}_{i \in \Omega} $ such that 
    	\begin{enumerate}
    		\item For any measurable set $ A $, 
    		\begin{equation*}
    		\mu\left( A\right) = \int_{\Omega}  \mu_i \left( A\right) d \nu(i).
    		\end{equation*}
    		\item For almost every $ i \in \Omega $, either $ i $ is a singleton or $ \mu_i $ is a $ CD( \kappa, N+1) $ affine needle. 
    		\item For almost every $ i $, 
    		\begin{equation*}
    		\int_i f d \mu_i = 0 .
    		\end{equation*} 
    	\end{enumerate}
    \end{lemma}
    The four function theorem was shown in \cite{kannan1995isoperimetric} for the Euclidean space; for that case we can reduce from general needles to the case of exponential needles.   
    It is conjectured in \cite{MR3709716} that the above theorem should generalize to the $ \kappa \geq 0 $ case for positive $ N $. 
    
    \section{Isoperimetric Inequality on Manifolds with Positive Curvature Lower Bounds} \label{sec:Iso_ineq}
    To show that the random walk mixes fast, we need to show that the conductance of the walk is high. 
    A standard way of showing conductance bounds is through isoperimetry. 
    Bounds on the isoperimetry in convex sets in the Euclidean case were studied by \cite{kannan1995isoperimetric} and there has been a succession of works in the area.     
    The main idea is that since the walks we are interested in are geometric with transition kernel related to the volumes in the convex body, one expects the conductance of the Markov chain induced by the walk to be related to the isoperimetry profile of the set. 
    
    \todo[disable]{Rewrite the following paragraph according to our discussion}
    In the following theorem, we show a version of the isoperimetry theorem from \cite{kannan1995isoperimetric} in the Riemannian setting. 
    Bounds for the Cheeger constant of convex domains of manifolds with Ricci curvature lower bounds were shown in \cite{milman2011isoperimetric}. 
    But in order to apply the isoperimetric theorem to the geometric random walk of interest, we need to get a robust version of the above, similar to the statement in \cite{kannan1995isoperimetric}. 
    We use the localization theorem from \cite{MR3709716} to get the required statement, similar to the proof of the isoperimetric theorem given in the same paper. 
    In this case, we can reduce the required inequalities largely to the Euclidean log-concave case, since the $CD\left(\kappa,N\right)$ condition in this case leads to the log-concavity of the one-dimensional measures. 
    Using the localization technique, we can reduce the required inequalities largely to the Euclidean log-concave case, since the $CD\left(\kappa,N\right)$ condition in this case leads to the log-concavity of the one-dimensional measures. 
    
    \begin{theorem} \label{thm:isoperimetry}
    	Let $ M $ be a manifold with non-negative Ricci curvature, i.e. 
    	\begin{equation*}
    	Ric_{M} \geq 0. 
    	\end{equation*}
    	Let $ K \subset M $ be a strongly convex subset. 
    	Then for any pairwise disjoint subsets $ K_1 , K_2 , K_3 \subseteq K$ such that $ K = \cup_i K_i $ and $ d(K_1, K_3) \geq \epsilon $, we have
    	\begin{equation*}
    	\frac{m}{\epsilon \log 2 } \mathrm{Vol}(K)	\mathrm{Vol}(K_2) \geq \mathrm{Vol}(K_1) \mathrm{Vol}(K_3), 
    	\end{equation*}
    	where $ m = \frac{1}{\mathrm{Vol} \left(K\right)  }\int_K d(x,y) d\mathrm{Vol}(y)  $ for some $ x $. 
    \end{theorem}
    \begin{proof}
    	We will use the localization from \autoref{thm:Klartag} to reduce the problem to an inequality about one-dimensional functions.
    	We will assume that the appropriate sets are closed. 
    	For $ i = 1 , 2 ,3 $, let  $ f_i $ be indicators for $ K_i $ and let $ f_4 $ be $ \left( \epsilon \log 2 \right)^{-1}d\left( x , y  \right) $ for some point $ x $ to be picked later.
    	Then, the required equation reduces to 
    	\begin{equation*}
    	\int_M f_1 \left( x \right) d\mathrm{Vol} \int_{M} f_{3} \left( x\right) d\mathrm{Vol} \leq \frac{1}{\epsilon \log 2 } \int_{M} f_2(x) d\mathrm{Vol} \int_{M} f_4 \left( x\right) d\mathrm{Vol}.
    	\end{equation*}
    	To prove this we use \autoref{cor:four_func}. 
    	Towards this consider a $ CD(0 , \infty ) $ needle $ \eta  $ with the associated minimizing geodesic $ \gamma $. 
    	Note that since $ K $ is strongly convex, we can consider $ \gamma $ is completely contained in $ K $
    	We need to show the show the following inequality. 
    	\begin{equation*}
    	\int_{ M } f_1 \left( x \right) d\eta \int_{M} f_{3} \left( x\right) d\eta \leq \frac{1}{\epsilon \log 2 } \int_{M} f_2(x) d\eta  \int_{M} f_4 \left( x\right) d\eta.
    	\end{equation*}  
    	Using the fact that $ \eta  $ is a push forwards measure, we get 
    	\begin{equation*}
    	\int_{ \gamma^{-1} K_1   } e^{ - \psi \left(x\right)  } dx \int_{ \gamma^{-1} K_3   } e^{ - \psi \left(x\right)  } dx \leq \frac{1}{ \epsilon \log 2}   \int_{ \gamma^{-1} K_2   } e^{ - \psi \left(x\right)  } dx  \int_{ \gamma^{-1} K   }  \abs{x - u }  e^{ - \psi \left(x\right)  } dx 
    	\end{equation*}
    	The last integral follows by noting that we can as assume without loss of generality that the point we are interested in lies on the geodesic of interest and using the fact that $ \gamma $ is a minimizing geodesic. 
    	
    	Note that since, $ \psi $ is a convex function, we have that the $e^{-\psi}$ measure is log-concave. Using the reduction from \cite{kannan1995isoperimetric}, we reduce from the case of arbitrary log concave measures to exponential needles. 
    	
    	\begin{lemma}[\cite{kannan1995isoperimetric}] \label{lem:kannan}
    		Let $f _ { 1 } , f _ { 2 } , f _ { 3 } , f _ { 4 }$ be four non-negative continuous functions defined on an interval $ [a,b] $ and $ \alpha, \beta  > 0 $. Then, the following are equivalent. 
    		\begin{enumerate}
    			\item For every log-concave function $ F: \br \to \br  $, 
    			\begin{equation*}
    			\left( \int _ { a } ^ { b } F ( t ) f _ { 1 } ( t ) d t \right) ^ { \alpha } \left( \int _ { a } ^ { b } F ( t ) f _ { 2 } ( t ) d t \right) ^ { \beta } \leq \left( \int _ { a } ^ { b } F ( t ) f _ { 3 } ( t ) d t \right) ^ { \alpha } \left( \int _ { a } ^ { b } F ( t ) f _ { 4 } ( t ) d t \right) ^ { \beta } ;
    			\end{equation*}
    			\item For subinterval $ [a',b'] \subseteq [a,b] $, and every real $ \delta $, 
    			\begin{equation*}
    			\left( \int _ { a ^ { \prime } } ^ { b ^ { \prime } } e ^ { \delta t } f _ { 1 } ( t ) d t \right) ^ { \alpha } \left( \int _ { a ^ { \prime } } ^ { b ^ { \prime } } e ^ { \delta t } f _ { 2 } ( t ) d t \right) ^ { \beta } \leq \left( \int _ { a ^ { \prime } } ^ { b ^ { \prime } } e ^ { \delta t } f _ { 3 } ( t ) d t \right) ^ { \alpha } \left( \int _ { a ^ { \prime } } ^ { b ^ { \prime } } e ^ { \delta t } f _ { 4 } ( t ) d t \right) ^ { \beta }.
    			\end{equation*}
    		\end{enumerate}
    	\end{lemma}
    	
    	Using \hyperref[lem:kannan]{Lemma \ref*{lem:kannan}}, we reduce the required inequality to one about exponential functions.

    	\begin{equation*}
    	\int_{ \gamma^{-1} K_1   }  e^{ \alpha t   } dt \int_{ \gamma^{-1} K_3   } e^{  \alpha t   } dt \leq \frac{1}{ \epsilon \log 2}   \int_{ \gamma^{-1} K_2   }  e^{  \alpha t   } dx  \int_{ \gamma^{-1} K   }  \abs{x - u }  e^{  \alpha t   } dx 
    	\end{equation*}
    	
    	Note that $ \gamma^{-1} K_i   $ is partition of the interval of support $ \gamma $ into measurable sets, $ J_1 $, $ J_2  $ and $ J_3 $. The required inequality then follows from the proof of Theorem 5.2 in \cite{kannan1995isoperimetric}.
    \end{proof}

    \section{Overlap of One-step Distributions} \label{sec:One_Step}
    To reduce the conductance of the random walk to the isoperimetric inequality, we need to show that the two points that are close by have single step distributions that are close by in total variation distance. 
    To this end we use the techniques followed by \cite{lee2018convergence}.
    \todo[disable]{The following two theorem statements need work: What manifolds do the theorem and lemma apply to? What is $R_1$ in the theorem and what is $R$ in the lemma? What are $P_x, P_y$ in the theorem?}
    In the statement below, we define the Frobenius norm of the Riemannian curvature tensor as, 
    \begin{equation*}
    \norm{R_M} = \mathbb{E}_{u,v \sim N\left( 0, g^{-1} (x)  \right) }   \left[ \ip{R\left( u , \gamma' \right)  \gamma'  }  {v}    \right] ,
    \end{equation*}
    where $ \gamma(t) $ is a geodesic on the manifold.
    Note that the Frobenius norm implicitly depends on the point on the manifold.  
    Also, recall that for any point $ x \in M $, $ P_x\left(x\right) $ denotes the probability that the geodesic walk, specified by \hyperref[alg:GeoWalk]{Algorithm \ref*{alg:GeoWalk}}, starting at a point $ x $ stays at the point in one step. 
    The following definition defines a parameter of the manifold upper bounding the curvature. 
    
    \begin{definition}
    	Let $ \left(M,g\right) $ be a Riemannian manifold.  Let $ R_M $ denote the Riemannian curvature tensor. Then, define 
    	\begin{equation*}
    	R = \max_x  \norm{R_M}_{F}
    	\end{equation*}
    	where the maximum is taken over all points. 
    \end{definition}
    
    \begin{theorem}[One Step Overlap] \label{thm:one_step}
    	Let $ (M,g) $ be a complete Riemannian manifold.
    	Let $ x, y \in M $ be points on the manifold with $ P_x(x) , P_y(y)  \leq c_2$. 
    	Then, for $ \delta^2 \leq \frac{1}{100 \sqrt{n} R } $, we have 
    	\begin{equation*}
    	d_{TV} \left( P_x , P_y   \right) \leq c_2 + O\left(  \frac{1}{\delta} \right) d(x,y) + \frac{1}{25}. 
    	\end{equation*}
    \end{theorem}
    \begin{proof}
    	Note that, since there is a non zero probability of remaining at a given point, the one step distribution is not absolutely continuous with respect to the Riemannian volume. 
    	But, if we exclude this point then the measure does indeed become absolutely continuous with respect to the volume.
    	Now, we need to compare the one step distribution of geodesic walk from two different points. 
    	To do this, we use the following lemma from \cite{lee2018convergence}, noting that the geodesic walk is an instantiation of the Hamiltonian walk with the choice of the Hamiltonian mentioned earlier. 
    	Denote by $ \tilde{P}_x $ and $ \tilde{P}_y $, one step distributions of the geodesic walk without the constraint from the convex set. 
    	
    	\begin{lemma}[see Lemma 25 in \cite{lee2018convergence}]
    		For $ \delta^2 \leq \frac{1}{100 \sqrt{n} R } $, 
    		we have 
    		\begin{equation*}
    		d_{TV} \left( \tilde{P}_x , \tilde{P}_y   \right) \leq O \left( \frac{1}{\delta} \right) d(x,y) + \frac{1}{25}. 
    		\end{equation*}
    	\end{lemma}
    	This gives us the required result. 
    \end{proof}

    	\section{Bounds on Conductance of the Geodesic Walk} \label{sec:Conductance_Bound}
		The following theorem is a version of the general framework of reducing the conductance of a geometric random walk to that of the isoperimetry of the underlying space. 
		We know that nearby points have high overlap of one-step distribution, implying that if two points don't have high overlap, then they must be far apart. 
		Using this we reduce the conductance to isoperimetry of the set given by the \autoref{thm:isoperimetry}. 
		
		Since, we are looking at a walk such that at each step there is a probability of step getting rejected, we need to make certain that we are at a point such that local conductance is not too small. This motivates the following definition. 
        
        \begin{definition}
            Let $ K \subseteq M  $ be a strongly convex set. Define the set of points with high local conductance with respect to the geodesic walk with step size $ \delta $ to be 
            \begin{equation*}
                K_{-\delta} := \{ x \in K :  P_{x} \left( x\right) = 0  \}. 
            \end{equation*}
        \end{definition}
        
        The set $K_{-\delta}$ turns out to be nice: 
        
        \begin{lemma}[Theorem 1.9 in \cite{MR0309010}] \label{fact:Conv_K_Delta}
            Let $ K \subseteq M $ be a strongly convex set on a manifold with nonnegative sectional curvature and 
            let $ K_{-\delta} $ be defined as above. Then, $ K_{-\delta} $ is a strongly convex subset.
        \end{lemma}
        
        Next we show that the volume of $ K_{-\delta} $ is reasonably large compared to the volume of the original set $ K $. 
        To do this we need to use a finer version of the theorem from \cite{MR3709716}. 
        We need the following definition. 
        
        \begin{definition}[Strain, \cite{MR3709716}]
            Given a $1$-Lipshitz function $ u : M \to \br $ and a point $ y $, we say that $ y $ is a strain point of $ u $ if there exist two points $ x $ and $ z $ with $ d(x,z) = d(z,y) + d(y,x) $ such that 
            \begin{equation*}
                u(x )- u(y) = d(x,y) >0
            \end{equation*}
            and 
            \begin{equation*}
                u(z) - u(y) = d(y,z) >0.
            \end{equation*}
            Define the set of strain points on the manifold to be $ Strain[u] $. 
        \end{definition}
        
        \begin{lemma}[\cite{MR3709716}]
            Consider the relation on $ Strain[u] $ by 
            \begin{equation*}
                x \sim y \iff \abs{u     \left(  x \right)    - u(y)   } = d(x,y)  . 
            \end{equation*}
            Then, the relation is an equivalence relation on $ Strain[u] $ and each equivalence class is an image of a minimizing geodesic.  
        \end{lemma}
    
        Denote by $ T^{\circ}\left[u\right] $ the set of equivalence classes. 
        Each element of $ T^{\circ}\left[u\right] $ is associated with the image of a minimizing geodesic. 
    	Using the above definitions, we state the following more general version of \autoref{thm:Klartag}. 
    	It can also be shown that the following theorem implies \autoref{thm:Klartag} using the Monge-Kantorovich duality.
    	\todo[disable]{The set T below hasn't been defined. What is $\nu$?}

        \begin{theorem}[\cite{MR3709716}] \label{thm:Klartag_Lips}
            Let $ M $ be a manifold satisfying the curvature dimension condition $ CD(0,N) $. Let $ u : M \to \br  $ be a 1-Lipschitz function. Then, there exists a measure $ \nu $ on the set $ T^{\circ}\left[u\right] $ and  a family $ \left\{  \mu_I \right\} $ such that 
            \begin{enumerate}
                \item For any measurable set $ A \subseteq M $, the map $ I \to \mu_{I} \left( A\right) $ is well defined $ \nu  $ almost everywhere. 
                For a measurable subset $ S \subseteq T^{\circ}\left[ u  \right]  $, we have $ \pi^{-1} \left( S \right) \subseteq Strain\left[u\right] $ is a measurable subset of $ M $. 
                \item For any measurable set $ A \subseteq M $, we have 
                \begin{equation*}
                    \mu\left(  A \cap Strain\left[u\right]   \right) = \int_{T^{\circ}\left[ u  \right]} \mu_{I} \left(A\right)   d \nu(I)  .  
                \end{equation*}
                \item For $\nu $-almost every $I  \in T^{\circ}\left[ u  \right] $, $ \mu_{I} $ is a $CD(k,N)$-needle supported on $I \subseteq M$. Furthermore, the set $A \subseteq \br $ and the minimizing geodesic $\gamma : A \to M$ may be selected so that $I = \gamma(A)$ and
                \begin{equation*}
                    u\left( \gamma(t)  \right) = t. 
                \end{equation*}
            \end{enumerate}
        \end{theorem}
        
        
        \begin{theorem} \label{thm:rev_iso}
        	Let $ K \subseteq M $ be a strongly convex subset of $ M $ containing a ball of radius $ r $. Then, for $ \epsilon \leq rn^{-1} $, we have 
        	\begin{equation*}
        	\mathrm{Vol}   \left( K - K_{-\epsilon} \right)   \leq \frac{ e  n  \epsilon }{r}  \mathrm{Vol}   \left( K  \right).
        	\end{equation*}
        \end{theorem}
        \begin{proof}
        	We prove this using localization.  
        	Using $\chi$ for the indicator function, we can rewrite our desired inequality as
        	\begin{equation*}
        	\int_{M}   \chi_{ K - K_{-\epsilon}  }\left(x  \right) d \mathrm{Vol} \left(x\right)  \leq \frac{ e  n  \epsilon }{r} \int_{M} \chi_K\left(  x  \right) d \mathrm{Vol} \left(x\right) . 
        	\end{equation*}
        	Let $ x^{*} $ be a point such that there is a ball of radius $ r $ centered around $ x^{*} $ contained within $ K $.
        	We use \autoref{thm:Klartag_Lips} with respect to the function $ u(x) = d(x, x^{*}) $.
        	Note that the strain sets of $u$ are geodesics through $ x^{*} $ and since $K$ is strongly convex and the manifold is geodesically complete, \todo[disable]{Why is it geodesically complete? I guess we need to say something more about the manifold in the theorem statement.} we can extend these geodesics to the boundary. 
        	Using the localization for $ CD\left(0 , n \right) $ manifolds, where $ n $ is the dimension of the manifold, we decompose the measure into needles. 
        	Thus, the required inequality reduces to showing \todo[disable]{Check the integrand}
        	\begin{equation*}
        	\int_{I}   \chi_{K - K_{-\epsilon}}(x) d \eta_I  \leq \frac{ e   n  \epsilon }{r} \int_{I} \chi_{K}(x) d\eta_I .
        	\end{equation*}
        	for every $ CD \left( 0 , n  \right) $ needle $ \eta_I $. 	
        	In the special setting of positive curvature, we need not consider general needles but can restrict ourselves to affine needles:
        	
        	\begin{lemma}[Reduction to $ \kappa = 0 $ affine case, \cite{MR3709716}, \cite{MR2060645}] \label{cor:affine_needle_0}
        		Let $ \mu  $ be a $ CD( 0, N) $ needle on $ \br $.
        		Consider an integrable, continuous function $ f :  \br \to \br  $ satisfying $ \int_{M} f d \mu = 0 $. 
        		Then, there exists a partition $ \Omega  $ of $ \br $ and a measure $ \nu  $ on $ \Omega  $ and a family of measures 
        		$ \left\{ \mu_i  \right\}_{i \in \Omega} $ such that 
        		\begin{enumerate}
        			\item For any measurable set $ A $, 
        			\begin{equation*}
        			\mu\left( A\right) = \int_{\Omega}  \mu_i \left( A\right) d \nu(i).
        			\end{equation*}
        			\item For almost every $ i \in \Omega $, either $ i $ is a singleton or $ \mu_i $ is a $ CD( 0, N) $ affine needle. 
        			\item For almost every $ i $, 
        			\begin{equation*}
        			\int_i f d \mu_i = 0.
        			\end{equation*} 
        		\end{enumerate}
        	\end{lemma}
        	
        	From the above lemma, we can take the needle to be an affine needle. 
        	The condition $CD\left(0,n\right)  $ gives us that $ \eta_j  $ is the pushforward of $ e^{-\phi} $, where  
        	\begin{equation*}
        	\phi'' = \frac{\left(\phi'\right)^2}{n-1}. 
        	\end{equation*}
        	It is easy to check that the solutions to this equations are of the form
        	\begin{equation*}
        	\phi\left(x\right) = a_1 - \left(N-1\right) \log\left(a_2 + \frac{x}{n-1}  \right)
        	\end{equation*}
        	for some constants $ a_1 $ and $ a_2 $.
        	
        	Hence the required needles have density 
        	\begin{equation*}
        	e^{-a_1} \left( a_2 + \frac{x}{n-1}    \right)^{n-1}.
        	\end{equation*}
        	We can now reduce to the following claim about one-dimensional integrals. 
        	\begin{lemma}\label{lem:numerical}
        		Let $ \left[ a , b  \right]  \subset \br $ be  a interval. Then, for every affine function $  c_1 x + c_2 $ such that $ c_1x + c_2 $ is positive on $ \left[ a, b \right] $ and $\epsilon \leq \left(b-a\right)n^{-1} $ , we have 
        		\begin{equation*}
        		\int_{a}^{b}  \left( c_1 x + c_2 \right)^{n-1} \, dx \geq \frac{ b-a}{\epsilon ne}   \int_{b}^{b+ \epsilon}   \left( c_1 x + c_2 \right)^{n-1} dx
        		\end{equation*}
        	\end{lemma}
        	\begin{proof}
        		Using the required change of variables $ y = c_1 x + c_2 $, we reduce to the following case 
        		\begin{equation*}
        		\int_{c_1a + c_2}^{c_1b + c_2}  y^{n-1} \, dx \geq \frac{ b-a}{\epsilon n}   \int_{c_1b +c_2}^{  c_1b + c_2 +  c_1\epsilon}   y^{n-1} dx,
        		\end{equation*}
        		where all the limits of integration are non-negative.
                Setting $ c_1a + c_2 = a '  $, $ c_1b + c_2 = b' $ and $ \epsilon' = c_1 \epsilon $, we get 
                \begin{equation*}
                    \int_{a'}^{b'}  y^{n-1} \, dx \geq \frac{ b-a}{\epsilon n}   \int_{b'}^{ b'+ \epsilon'}   y^{n-1} dx.
                \end{equation*}
                
                Expanding the integrals and taking the ratio, we get the following ratio to upper bound. 
                \begin{align*}
                	\frac{\left( b' + \epsilon'  \right )^n - b'^n   }{b'^n - a'^n} =& \frac{b'^n \left(  1 + \frac{\epsilon'}{b'}   \right)^n - 1   }{b'^n - a'^n}  \\
                	=& \frac{ \left(  1 + \frac{\epsilon'}{b'}   \right)^n - 1   }{1- \frac{a'^n}{b'^n} }.  \\
                	\intertext{Note that the last expression is polynomial in $ \epsilon' $ with zero constant term. }
                	\frac{\left( b' + \epsilon'  \right )^n - b'^n   }{b'^n - a'^n}  =& \epsilon' \sum_{i = 1}^{n}  q_i \epsilon'^{i-1}
                	\intertext{The second factor on the right hand side is monotone in $ \epsilon' $. Setting $ \epsilon' = \left( b' - a' \right) n^{-1} $} 
                	 \frac{\left( b' + \epsilon'  \right )^n - b'^n   }{b'^n - a'^n}  \leq & \epsilon' \sum_{i = 1}^{n}  q_i \left(\frac{b' - a'}{n} \right)^{i-1}.
                	 \intertext{Divinding and multiplying by $ \left(b'-a'  \right) n^{-1} $ }
                	 \frac{\left( b' + \epsilon'  \right )^n - b'^n   }{b'^n - a'^n}  \leq  & \frac{\epsilon' n }{b'-a'} \sum_{i = 1}^{n}  q_i \left(\frac{b' - a'}{n} \right)^{i}.
                	 \intertext{Substituting this back into the earlier expression gives}
                	 \frac{\left( b' + \epsilon'  \right )^n - b'^n   }{b'^n - a'^n}  \leq  & \frac{\epsilon' n }{b'-a'} \frac{ \left(  1 + \frac{b'-a'}{nb'}   \right)^n - 1   }{1- \frac{a'^n}{b'^n} }  
                	 \intertext{Setting $ \alpha = b' a'^{-1}  $ }
                	 \frac{\left( b' + \epsilon'  \right )^n - b'^n   }{b'^n - a'^n}  \leq & \frac{\epsilon' n }{b'-a'} \frac{ \left(  1 + \frac{1 - \alpha}{n}   \right)^n - 1   }{1- \alpha^n }. 
                	 \intertext{Factorizing the numerator and denominator, we get }
                	 \frac{\left( b' + \epsilon'  \right )^n - b'^n   }{b'^n - a'^n}  \leq & \frac{\epsilon' }{b'-a'} \frac{ \sum_{i = 0}^{n- 1}  \left( 1 + \frac{1 - \alpha}{n}   \right)^{i}  }{ \sum_{i = 0}^{n-1} \alpha^i  }. 
                	 \intertext{Note that the denominator is greater than one.}
                	 \frac{\left( b' + \epsilon'  \right )^n - b'^n   }{b'^n - a'^n} \leq & \frac{\epsilon' }{b'-a'}  \sum_{i = 0}^{n- 1}  \left( 1 + \frac{1 - \alpha}{n}   \right)^{i}.  
                	 \intertext{Note that $ 1- \alpha \leq 1 $,} 
                	 \frac{\left( b' + \epsilon'  \right )^n - b'^n   }{b'^n - a'^n} \leq & \frac{\epsilon' }{b'-a'}  \sum_{i = 0}^{n- 1}  \left( 1 + \frac{1 }{n}   \right)^{i} \\
                	 \leq & \frac{\epsilon' }{b'-a'}  \sum_{i = 0}^{n- 1}  \left( 1 + \frac{1 }{n}   \right)^{n} \\
                	 \leq &  \frac{\epsilon' }{b'-a'}  \sum_{i = 0}^{n- 1}  e \\
                	 \leq & \frac{\epsilon'n e}{b'-a'}, 
                \end{align*}
                completing the proof of \hyperref[lem:numerical]{Lemma \ref*{lem:numerical}}.
                \end{proof}
            This completes the proof of \hyperref[thm:rev_iso]{Theorem \ref*{thm:rev_iso}}. 
            \end{proof}            

        We next show the bounds on the conductance of our random walk for manifolds of positive curvature.

         	\begin{theorem}[Conductance in Positive Curvature] \label{thm:positive}
             Let $ \left(M,g\right) $ be a Riemannian manifold and $ K\subseteq M $ be a strongly convex set satisfying the following conditions. 
             \begin{itemize}
                 \item $ M $ has positive sectional curvature i.e. $ Sec_{M} \geq 0 $.
                 \item The Riemannian curvature tensor is upper bounded i.e. $ \max \norm{R}_{F}  \leq R $. 
                 \item $ K $ contains a ball of radius $ r $. 
             \end{itemize}
             Then, for any $ 0 \leq s \leq 0.5 $, the geodesic walk with step length  $ \delta^2 \leq \frac{1}{100 \sqrt{n} R }  $ and $ \delta \leq \frac{sr}{4n\sqrt{n}} $  has $s$-conductance
             \begin{equation*}
                \phi_s \geq \Omega\left( \frac{\delta  }{m}\right),
             \end{equation*}
             where  $m =  \frac{1}{\mathrm{Vol} \left(K\right)  }\int_K d(x^*,y) d\mathrm{Vol}(y)$ for some point $ x^{*} $.  
         \end{theorem}
         \begin{proof}
             By \autoref{thm:rev_iso} $, \mathrm{Vol} \left( K_{-\delta\sqrt{n}}   \right)    \geq \left( 1 - \frac{\left(e-1\right)  \delta n \sqrt{n}  }{r} \right)  \mathrm{Vol}(K) $.
             We are considering step length $ \delta \sqrt{n} $ because the length of the Gaussian random variable in $ n $ dimensions is $ \sqrt{n} $ with high probability. 
             Consider an arbitrary measurable subset $ S_1 $ and its complement $ S_2 $. Bounding the conductance of the random walk boils down to bounding the probability of transitioning from $ S_1 $ to $ S_2 $. \todo[disable]{Last sentence not clear.}
             In order to do this, we shall look at points in the sets that have a low chance of crossing over to the other side. 
             Since we are bounding the $ s $-conductance, we consider $ \mathrm{Vol}\left(S_1\right) , \mathrm{Vol}\left(S_2\right) \geq s \mathrm{Vol}\left(K\right)$.
             Since the walks we consider are geometrical in nature, this corresponds to partitioning the set into well separated subsets. 
             With that in mind, define 
             \begin{align*}
             S'_1 = & \left\{ x \in S_1 \cap K_{ - \delta} : P_x \left(  S_2\right) \leq  c_1 \right\} \\
             S'_2 = & \left\{ x \in S_2 \cap K_{ - \delta} : P_x \left(  S_1\right) \leq   c_1 \right\} . 
             \end{align*}
             We can assume without loss of generality that $ \mathrm{Vol}\left(  S'_1 \right)  \geq 0.5 \mathrm{Vol}(S_1) $ and $ \mathrm{Vol}(S'_2 ) \geq 0.5 \mathrm{Vol}(S_2) $. If it were not, it is easy to see that the conductance of these sets is high.
             Consider any points $ x \in S'_1 $ and $ y \in S'_2 $. Then, 
             \begin{align*}
             d_{TV} \left( P_x , P_y   \right) \geq &  1 - P_x\left( S_2  \right) - P_y\left( S_1 \right)  \\
             \geq & 1 - 2c_1.
             \end{align*}
             Since $ x ,y \in K_{- \delta} $, the local conductance $ P_x(x) $, $ P_y(y)  =0  $, applying \autoref{thm:one_step}, we get 
             \begin{equation*}
             O\left( \delta^{-1} \right)	d\left( x , y\right) \geq  1  - 2c_1 -0.04. 
             \end{equation*} 
              Since this is true for an arbitrary pair of points $ x , y $, we get the same bound for $ d(S_1,S_2) $. 
             Denote by $ S'_3 = K_{-\delta} \cap \left(  S'_1 \cup  S'_2 \right)^{c}   $. 
             We now apply \autoref{thm:isoperimetry} to the partition of $ K_{-\delta} $ given by the sets $ S'_1 $, $ S'_2 $ and $ S'_3 $. 
             \begin{equation*}
             \mathrm{Vol}\left(  S'_3 \right) \geq \Omega \left( \delta  \right)  \frac{ 1   }{ m \mathrm{Vol}\left( K_{-\delta} \right)}\mathrm{Vol}\left( S'_2 \right) \mathrm{Vol}\left( S'_1 \right)
             \end{equation*}
             This gives us, 
             \begin{align*}
             \int_{S_1}   P_x\left( S_2  \right) dx \geq &  \frac{c_1}{2}  \frac{\mathrm{Vol}(S'_3)}{\mathrm{Vol}(K)}  \\
             \geq &  \Omega\left( \delta  \right)  \frac{ \mathrm{Vol}\left( S'_2 \right) \mathrm{Vol}\left( S'_1 \right)   }{ m  \mathrm{Vol}(K) \mathrm{Vol}\left( K_{-\delta}  \right)} \\
             \geq & \Omega\left( \delta  \right) \frac{ \min\{\mathrm{Vol}\left( S'_2 \right), \mathrm{Vol}\left( S'_1 \right) \}  }{ m \mathrm{Vol}(K)}   .
             \end{align*}
             Here, $ m = \int_K d( x^{*}  , x )  d\mathrm{Vol}(x)  $ for some point $ x^* $. 
             Note that $ m \leq D $ where $ D $ is the diameter of the set. 
             This show the required bound on the conductance that we require. 
         \end{proof}

 		The theorem below shows that Markov chains with high conductance have small mixing times as required. Before that we state a lemma.

        \begin{lemma}[$s$-Conductance Implies Fast Mixing , see \cite{vempala2005geometric}] \label{lem:cond}
              			Let $ \mu_t $ be the distribution of the random walk after $ t $ steps starting at $ X_0 $, distributed as $ \mu_0 $. 
                         Let $ H_s = \sup  \left\{ \abs{\mu_0 \left(A\right) - \mu(A) } :  \mu\left(A\right) \leq s   \right\} $
                         where $ \mu $ is the stationary distribution of the random walk. 
                          Then, 
             			\begin{equation*}
             				d_{TV} \left( \mu_t , \mu    \right) \leq H_s +     \frac{H_s}{s} \left( 1 - \frac{\phi_s^2}{2}  \right)^{t}.
             			\end{equation*}
             			 
        \end{lemma}
     
 		
 		\sampling*
 		\begin{proof}
             First note that $ H_s \leq Hs $. 
 		      By \autoref{thm:positive}, we have that for $ \delta^2 = O \left(   \frac{1}{\sqrt{n}  R }  \right)  $ and $ \delta \leq \frac{sr}{4n \sqrt{n}} $  , we have  $ \phi^2 = \Omega\left(  \frac{s^2r^2}{m^2 n^3  \max\left\{1,R\right\}  }  \right) $ and thus from \autoref{lem:cond}, we have for each $ s = \epsilon_1 H^{-1} $,
                \begin{equation*}
                    d_{TV} \left( \mu_t , \mu    \right) \leq \epsilon_1 + H \left( 1 - \Omega  \left(  \frac{s^2r^2}{m^2 n^3 \max\left\{1,R\right\} } \right)   \right)^t. 
                \end{equation*}
                Picking $ \epsilon_1 = \epsilon /2 $ and $ t = O \left( \frac{H^2 m^2 n^3 \max\left\{1,R\right\}}{r^2\epsilon^2} \log \left( H / \epsilon     \right) \right) $, we get the desired result. 
 		\end{proof}
 		
         
         If we assume a strictly positive lower bound on the Ricci curvature, then by the Bonnet--Myers Theorem we get an upper bound on the diameter of the manifold and thus on the diameter of the convex set. 
         
         \begin{fact}[Bonnet--Myers Theorem, see \cite{MR2229062}]
             Let $ M $ be a complete Riemannian manifold, of dimension $ n \geq 2 $, such that there exists a constant $ \kappa > 0 $ for which 
             \begin{equation*}
                Ric_{M} \geq \left( n-1 \right) \kappa.
             \end{equation*} 
             Then, $ M $ is compact with diameter lesser than $  \pi/ \sqrt{\kappa} $. 
         \end{fact}
         
         This gives us a diameter-independent bound on mixing time. 
         This can also be achieved by considering diameter independent isoperimetric inequalities under the strict positivity assumption.  
         
         With slightly different assumption on the convex subset, we can show bounds on the conductance rather than the $ s $-conductance. The proof of the theorem is the same as that of the theorem above but since each point has high conductance, we do not need to restrict to the large sets. 
         
         \begin{theorem}[Conductance in Positive Curvature with assumptions on local conductance] \label{thm:positive_2}
         	Let $ \left(M,g\right) $ be a Riemannian manifold and $ K\subseteq M $ be a strongly convex set satisfying the following conditions. 
         	\begin{itemize}
         		\item $ M $ has non-negative sectional curvature i.e. $ Ric_{M} \geq 0 $.
         		\item The Riemannian curvature tensor is upper bounded i.e. $ \max \norm{R}_{F}  \leq R $. 
         		\item $K$ has diameter $ D $. 
         		\item Each point in $ K $ has local conductance greater than $ c_2 $. 
         	\end{itemize}
         	Then, the geodesic walk with step length  $ \delta^2 \leq \frac{1}{100 \sqrt{n} R }  $ has 
         	\begin{equation*}
         	\phi \geq \Omega\left( \frac{\delta \left(1-c_2- 0.04 \right)^2  }{m}\right). 
         	\end{equation*}
         	where  $ m = \frac{1}{\mathrm{Vol}\left(K\right)} \int_K d( x^{*}  , x )  d\mathrm{Vol}(x)  $ for some point $ x^* $. 
         \end{theorem}
         \begin{proof}
			The proof is similar to that of \autoref{thm:positive}.
			In order to bound the conductance, we shall look at points in the sets that have a low chance of crossing over to the other side. 
			Since the walks we consider are geometrical in nature, this corresponds to partitioning the set into well separated subsets. 
			With that in mind, define 
			\begin{align*}
			S'_1 = & \left\{ x \in S_1  : P_x \left(  S_2\right) \leq  c_1 \right\}, \\
			S'_2 = & \left\{ x \in S_2 : P_x \left(  S_1\right) \leq   c_1 \right\} . 
			\end{align*}
			We can assume without loss of generality that $ \mathrm{Vol}\left(  S'_1 \right)  \geq 0.5 \mathrm{Vol}(S_1) $ and $ \mathrm{Vol}(S'_2 ) \geq 0.5 \mathrm{Vol}(S_2) $. If it were not, it is easy to see that the conductance of these sets is high.
			Consider any points $ x \in S'_1 $ and $ y \in S'_2 $. Then, 
			\begin{align*}
			d_{TV} \left( P_x , P_y   \right) \geq &  1 - P_x\left( S_2  \right) - P_y\left( S_1 \right)  \\
			\geq & 1 - 2c_1.
			\end{align*}
			Since, from the assumption, the local conductance $ P_x(x) $, $ P_y(y)  \leq c_2  $, applying \autoref{thm:one_step}, we get 
			\begin{equation*}
			O\left( \delta^{-1} \right)	d\left( x , y\right) \geq  1 - c_2 - 2c_1 -0.04. 
			\end{equation*} 
			Since this is true for an arbitrary pair of points $ x , y $, we get the same bound for $ d(S_1,S_2) $. 
			Let $ S'_3 = K\cap \left(  S'_1 \cup  S'_2 \right)^{c}   $. 
			We now apply \autoref{thm:isoperimetry} to the partition of $ K $ given by the sets $ S'_1 $, $ S'_2 $ and $ S'_3 $ 
			with $ c_1 = \left(1- c_2 -0.04\right)/4 $:
			\begin{equation*}
			\mathrm{Vol}\left(  S'_3 \right) \geq \Omega \left( \delta  \right)  \frac{ 1   }{ m \mathrm{Vol}\left( K \right)}\mathrm{Vol}\left( S'_2 \right) \mathrm{Vol}\left( S'_1 \right).
			\end{equation*}
			This gives us, 
			\begin{align*}
			\int_{S_1}   P_x\left( S_2  \right) dx \geq &  \frac{c_1}{2}  \frac{\mathrm{Vol}(S'_3) }{\mathrm{Vol}(K)} \\
			\geq &  \Omega\left( \left(1-c_2- 0.04 \right)^2 \delta  \right)  \frac{ \mathrm{Vol}\left( S'_2 \right) \mathrm{Vol}\left( S'_1 \right)   }{ m \mathrm{Vol}\left( K \right) ^2} \\
			\geq & \Omega\left(  \left(1-c_2- 0.04 \right)^2   \delta  \right) \frac{ \min\{\mathrm{Vol}\left( S'_2 \right), \mathrm{Vol}\left( S'_1 \right) \}  }{ m \mathrm{Vol}(K) }   .
			\end{align*} 
			Note that $ m \leq D $ where $ D $ is the diameter of the set. 
			This show the required bound on the conductance that we require. 
         \end{proof}

         	\begin{lemma}[Conductance Implies Fast Mixing , see \cite{vempala2005geometric}] \label{lem:cond2}
         	Let $ \mu_t $ be the distribution of the random walk after $ t $ steps starting at $ X_0 $, distributed as $ \mu_0 $, with $ H  = \norm{ \mu_0 / \mu  } $. Then, for each $ \epsilon > 0 $, 
         	\begin{equation*}
         	d_{TV} \left( \mu_t , \mu    \right) \leq  \epsilon + \sqrt{\frac{H}{\epsilon}} \left( 1 - \frac{\phi^2}{2}  \right)^t .
         	\end{equation*}
         \end{lemma}
     
     Putting together the previous theorem and lemma we get: 
         
         \begin{theorem}[Mixing Time Bound with assumptions on local conductance] \label{thm:sampling2}
         	Let $ \left( M , g  \right) $ be a manifold satisfying the conditions in \autoref{thm:positive_2} and let $ K \subset M $ be a strongly convex subset.  
         	Let the starting distribution be an $ H $-warm start, then for $ \delta^2 \leq \frac{1}{100 \sqrt{n} R_1 }  $ and 
         	\begin{equation*}
         	t = O \left( \frac{ m^2  }{\delta^2 \left( 1 - c_2 - 0.04  \right)^4} \log \left( \frac{H}{\epsilon}    \right) \right)
         	\end{equation*}
         	steps, the distribution $ \mu_t $ is $ \epsilon $ close to the uniform distribution on convex set in total variation. 
         \end{theorem}

    \section{Reduction from Sampling to Optimization for Positive Curvature} \label{sec:Optimization}
    In this section, we focus on the algorithms for solving convex programs on convex sets on manifolds. 
    That is, given access to a membership oracle to a strongly convex set $ K  \subseteq M$, an evaluation oracle for a convex function $ f : K \to \br $ and an error parameter $ \epsilon $, the algorithm needs to output a point $ y \in K$ such that 
    \begin{equation*}
        f(y)  - \min_x f(x) \leq \epsilon. 
    \end{equation*}
    Towards this goal, we adapt the simulated annealing algorithm from the Euclidean setting to the Riemannian setting. 
    Given a function $ f $ and a ``temperature" $ T $, define the density
    \begin{equation*}
         \pi_{f,T} \sim  e^{ -\frac{f}{T}  }. 
    \end{equation*}
    Intuitively, the function puts more weight on points where the function attains small values and sampling from the distribution is likely to output points near the minima of the function $ f $ for low enough temperature $ T $. 
    The issue is that sampling from the distribution for a low enough temperature is a priori as hard as solving the initial optimization problem. 
    The way to get around this issue to set a temperature schedule in which one progressively ``cools" the distribution so that the sample from the previous temperature acts to make it easier to sample from the next temperature. 
    Once we reach a low enough temperature, the sample we attain will be close to the optimum point with high probability. 
     
    Since we need to sample from a distribution proportional to the $ e^{-\frac{f}{T}}  $, the natural idea would be to use the Metropolis filter with respect to the original uniform sampling algorithm. 
    This naturally leads to the following algorithm for sampling from the required distribution 
    (this is same as Algorithm \ref*{alg:Geowalk_LC1} reproduced here for convenience):
     
     \begin{algorithm}[H]
         \SetAlgoLined
         \caption{Adapted Geodesic Walk on a Manifold $ M $ and function $ f $.}
         \label{alg:Geowalk_LC}
         \DontPrintSemicolon
         \KwIn{Dimension $ n $, Convex set $ K $, Convex function $ f : K \to \br $, Starting Point $ X_0 \in K $, Step Size $ \eta $, Number of Steps $ N $.}
         \For{$ \tau \leq N $}{
            Pick $ u_{\tau+1} \leftarrow N\left( 0 ,I   \right)  $ in $ T_{X_i}  M $ where the covariance is with respect to the metric. \;
             \eIf{$ y = \exp_{X_{\tau}}  \left( \eta\, u_{\tau+1}  \right) \in K $}{
                With probability $ \min \left( 1 , e^{-f( y  ) + f(X_{\tau})  } \right) $, set $ X_{\tau+1} \leftarrow y  $. \;
                With the remaining probability, set $  X_{\tau+1} \leftarrow X_{\tau} $. \;
         }{
            Set $ X_{\tau+1} \leftarrow X_{\tau} $. \;
            }
        }
         \KwOut{Point $ X_N \in K $ sampled approximately proportional to $ e^{-f} $.}
     \end{algorithm}

     We analyze the above algorithm over manifolds with non-negative Ricci curvature. This algorithm serves as a step in simulated 
     annealing.
     The sequence of temperatures $T_0, T_1, \ldots$ used by simulated annealing is called the temperature schedule or the cooling schedule. 
     
     \begin{algorithm}
            \SetAlgoLined
            \caption{Simulated Annealing on Manifold $ M $.}
            \label{alg:Optimization}
            \DontPrintSemicolon
            \KwIn{Dimension $ n $, Convex set $ K $, Convex function $ f : K \to \br $, Starting Point $ X_0 \in K $, Number of Iterations $ N $, Temperature Schedule $ T_i $.} 
            \For{$ \tau \leq N $}{
                Sample $ X_{\tau }  $ according to distribution $ \pi_{f, T_i} $  using \hyperref[alg:Geowalk_LC]{Algorithm \ref*{alg:Geowalk_LC}} for  $ \pi_{f, T_i} $ with starting point $ X_{\tau -1} $. \;
            }
            \KwOut{Point $ X_N \in K$ that approximately minimizes $ f $.}
     \end{algorithm}


    The main thing to specify in the design of the algorithm above is the temperature schedule, that is, the sequence of temperatures $ T_i $ from which we sample. 
    We need to set the schedule so that the distributions $\pi_{f,T_i}$ and $\pi_{f,T_{i+1}}$ are close for each $i$, yet keeping the length of the schedule small.  
    Following \cite{kalai2006simulated}, we use
    \begin{equation*}
        T_{i+1} = \left(1 - \frac{1}{\sqrt{n}}\right) T_i
    \end{equation*}
     which provides us with the required guarantees. 
     
    
    Note that since we are interested in optimizing convex functions, we need to sample from the analogues of log-concave distributions. 
    For showing mixing of the random walk, we follow the same technique as in the uniform case. 
    That is, we show that the walk has high conductance by reducing the conductance of the walk to isoperimetry of the set with respect to the required measure. 
    Towards proving isoperimetry, we adapt \autoref{thm:isoperimetry} to the above setting again using \autoref{thm:Klartag}, but now with respect to a weighted Riemannian manifold. 
    Since the measure of interest has density which is an exponential of a convex function, the question reduces to the positive curvature case. 
    
    \begin{theorem} \label{thm:isoperimetry_lc}
        	Let $ M $ be a manifold with positive curvature and let $ K $ be a strongly convex subset. 
        	Let $ g : K \to \br $ be a geodesically convex function and consider the measure with density proportional to 
        	\begin{equation*}
        		\pi_g \sim e^{-g},
        	\end{equation*}
            with respect to the Riemannian volume form. 
        Then for any disjoint $ K_1 , K_2 , K_3 $ such that $ K = \cup_i K_i $ and $ d(K_1, K_3) \geq \epsilon $, 
        \begin{equation*}
        \frac{m}{c\epsilon} \pi_g(K)	\pi_g(K_2) \geq \pi_g(K_1) \pi_g(K_3), 
        \end{equation*}
        where $ m = \int_K d(x,y) d\pi_g(y)  $ for some $ x $. 
    \end{theorem}
    \begin{proof}
    	We apply \autoref{thm:Klartag} with $ d\mu = e^{-g} d\mathrm{Vol} $. 
    	The generalized Ricci curvature then becomes 
    	\begin{equation*}
    		Ric_{\mu , N} = Ric_{M} + Hess_{g}. 
    	\end{equation*}
    	Since $ g $ is convex, we have $ Hess_{g}\left( v ,v\right) \geq 0 $ for all $ v  \in TM$. 
    	Thus, 
    	\begin{equation*}
    		Ric_{\mu , N} \geq 0 
    	\end{equation*}
    	as in the setting of \autoref{thm:isoperimetry}. 
    \end{proof}
    
    Next, we extend the one step overlap of the distribution to the current case. 
    We assume smoothness on the function in the following lemma. We remark that the technique from \cite{lovasz2007geometry} to smoothen the function by taking local averages is likely to work obviating the need for smoothness assumption though we cannot prove it.
    \todo[disable]{Quantify $c_2, c'_2$ below.}
  
%
    \begin{theorem}
    	  Let $g: M \to \br$ be convex and $L$-Lipschitz, and let $P_x^{g}$ be the one step distribution induced by the walk in \hyperref[alg:Geowalk_LC]{Algorithm \ref*{alg:Geowalk_LC}} starting at point $x \in M$. 
    	Let $ x, y \in M $ be points such that $ P_x \left(x\right) , P_y\left(y\right) \leq c_2  $ for some constant $ c_2 \leq 1 $, then for $\delta^2 = O\left( \frac{1}{\sqrt{n}  \left( 1+R \right) L^2  }\right) $, we have 
    	\begin{equation*}
    		d_{TV} \left(  P_x^{g}  ,  P_y^{g}   \right) \leq c_2 + 0.01 + O\left(  \frac{1}{\delta} \right) d(x,y) + \frac{1}{25}
    	\end{equation*}
    \end{theorem}
    \begin{proof}
        As before, we account for the probability of rejection in a step. 
        The rejection probability because of the convex set is given as before by 
        \begin{equation*}
            P_x\left(x\right) \leq c_2.
        \end{equation*}
        We need to account for the probability of rejection because of the additional Metropolis filter given by the function. 
        The probability of rejection is upper bounded by 
        \begin{equation*}
            1 - e^{- \abs{g(y) - g(y') } }
        \end{equation*}
        where $ y' $ is proposed next point. 
        Note that since $ g $ is Lipschitz, we have 
        \begin{equation*}
            \abs{g\left(y'\right)  - g\left(y\right) } \leq L d\left(y,y'\right).
        \end{equation*}
        Thus, the probability of rejection is upper bounded by 
        \begin{equation*}
            1 - e^{- L d\left(y,y'\right)}. 
        \end{equation*}
        From the definition of the walk, we can take $ d\left( y , y'  \right) \leq O\left(\delta \sqrt{n}\right)$ (the probability that larger vectors are drawn can be absorbed into the total variation distance of the one step distribution without the filter). 
        So the rejection probability is bounded above by
        \begin{equation*}
        1 - e^{- L \delta \sqrt{n}}. 
        \end{equation*}
        We get this to be a constant by setting $ \delta^2 = O\left( \frac{1}{n \left(1+R\right)  L^2  }\right) $ assuming $ L>1 $. 
        Having bounded the probability of rejection, we can use \autoref{thm:one_step} to bound the total variation distance. 
    \end{proof}

    \todo[disable]{In the above, the probability is not quite a constant because of the dependence on $R$. How do we account for that?}
    
    With the above theorems, we get the following bound for the mixing time. 
    
    \begin{theorem}[Mixing Time for Sampling from Gibbs distributions] \label{thm:sampling_logconcave}
    	Let $ M $ be a manifold and $ K  \subseteq M $ be a strongly geodesically convex subset. 
    	Let $ g : K \to \br $ be a geodesically convex function on $ K $ with Lipschitz constant $ L $. 
    	Let $\pi_{g,T} \left(x\right) \sim e^{ - \frac{g}{T}  }$ and let $ m_T =   \int_K  d(x , x^{*})    d\pi_{g,T}(x)  $ for some $x^* \in K$. 
    	Let $ r$ be the radius of the largest ball contained within $ K $. 
    	Let $ \mu_t $ be the distribution of the adapted geodesic walk for $ \pi_{g,T} $ after $ t $ steps, starting from a distribution that is $ H $-warm for $ \pi_{g,T} $. 
    	Then, for 
    	\begin{equation*}
    	k= O \left( \frac{H^2 m^2 n^3 \left(1+R\right) L^2}{r^2 T^2 \epsilon^2} \log \left( H / \epsilon     \right) \right)
    	\end{equation*}
    	steps, $ \mu_k $ is $ \epsilon $-close to $ \pi_{g,T} $ in total variation. 
    \end{theorem}

    
     Next, we show  that sampling from the distribution that we get does indeed give us a point close to the required optimum. 
    That is, for a small enough temperature $ T $, the expected value of the function under the distribution is small. 
    Using Markov's inequality, one can then bound the probability that a sample produces a value much larger than the optimal value. 
    In the Euclidean case, this inequality was shown by \cite{kalai2006simulated} for the case of linear functions and was explicitly shown for general convex functions in \cite{belloni2015escaping}. 
    
    \begin{theorem}   \label{thm:low_temperature_expectation}
        Let $ K \subseteq M  $ be a strongly convex subset of manifold $ M $ with non-negative Ricci curvature. 
        Let $ g : K \to \br $ be a convex function with minimum value zero. Let $ X $ be sampled from $ \pi_{g,T} $. Then, 
        \begin{equation*}
            \mathbb{E}_{ \pi_{g,T} } \left( g(X)   \right) \leq T\left(n+1\right) + \min_{x \in K} f\left( x \right).  
        \end{equation*}
    \end{theorem}
    \begin{proof}
    In order to prove the above theorem, we use a localization technique similar to the one used in \hyperref[thm:rev_iso]{Theorem \ref*{thm:rev_iso}}. 
    We note the above theorem in terms of integrals in the next theorem, to make it amenable to localization. 

    \begin{theorem}
        Let $ K \subseteq M $ be a convex subset of the manifold $ M $ satisfying the curvature dimension condition $ CD(0,N) $. For any convex function $ g : K \to \br $ with $ \min g(x) = 0 $, we have 
        \begin{equation*}
            \int_{K}    e^{-g}  g \leq \left(N+1\right)\int_K e^{-g}.
        \end{equation*}
    \end{theorem}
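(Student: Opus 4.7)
The strategy is to reduce the inequality to a one-variable statement about the distribution function of $g$ and then to exploit the Brunn--Minkowski inequality associated with the curvature-dimension condition. Since $\min g = 0$ forces $g \geq 0$, I would set
\begin{equation*}
    h(t) := \mathrm{vol}(\{x\in K : g(x) \leq t\}), \qquad t \geq 0,
\end{equation*}
and push forward the restricted volume measure by $g$ (equivalently, Fubini/layer-cake) to obtain
\begin{equation*}
    \int_K e^{-g} = \int_0^\infty h(t)\,e^{-t}\,dt, \qquad
    \int_K g\, e^{-g} = \int_0^\infty t\, h(t)\,e^{-t}\,dt - \int_0^\infty h(t)\,e^{-t}\,dt.
\end{equation*}
The target inequality then becomes $\int_0^\infty t\, h(t)\,e^{-t}\,dt \leq (N+2)\int_0^\infty h(t)\,e^{-t}\,dt$.

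Next I would invoke the $CD(0,N)$ Brunn--Minkowski inequality (in the Cordero-Erausquin--Sturm--Lott--Villani form): if $Z_\lambda$ denotes the set of $\lambda$-midpoints of two measurable sets $A,B$ along geodesics, then $\mathrm{vol}(Z_\lambda)^{1/N} \geq (1-\lambda)\,\mathrm{vol}(A)^{1/N} + \lambda\,\mathrm{vol}(B)^{1/N}$. Taking $A = \{g\leq s\}$ and $B = \{g\leq u\}$, the convexity of $g$ and of $K$ gives $Z_\lambda \subseteq \{g\leq (1-\lambda)s+\lambda u\} \subseteq K$, so that $\phi := h^{1/N}$ is concave and non-decreasing on $[0,\infty)$.

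Finally, writing $h=\phi^N$ and integrating by parts (the boundary terms vanish because a concave non-decreasing $\phi$ is at most linear, so $t\phi^N e^{-t}\to 0$) gives
\begin{equation*}
    \int_0^\infty t\,\phi^N e^{-t}\,dt = \int_0^\infty \phi^N e^{-t}\,dt + N\int_0^\infty t\,\phi^{N-1}\phi'\,e^{-t}\,dt.
\end{equation*}
Since $\phi'$ is non-increasing (by concavity) and $\phi(0)\geq 0$, a.e.\ one has $t\phi'(t) \leq \int_0^t \phi'(s)\,ds = \phi(t) - \phi(0) \leq \phi(t)$, hence $t\phi^{N-1}\phi' \leq \phi^N$ a.e.; substituting yields $\int_0^\infty t\,h\,e^{-t}\,dt \leq (N+1)\int_0^\infty h\,e^{-t}\,dt$, which is even strictly stronger than the $(N+2)$-bound needed by the reduction (and in fact gives $\int_K g e^{-g} \leq N \int_K e^{-g}$).

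The principal obstacle I anticipate is the curvature step: rigorously justifying concavity of $h^{1/N}$ in the full generality of a $CD(0,N)$ space, which requires the correct synthetic notion of midpoint sets, care about whether geodesic interpolants between sub-level sets stay inside $K$ (handled by convexity of $K$), and treatment of degeneracies such as $h(0)>0$ when $g$ vanishes on a set of positive measure. Once that concavity is available, the rest is elementary one-variable calculus.
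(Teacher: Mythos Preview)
Your argument is correct and follows a genuinely different route from the paper. The paper proceeds by contradiction via Klartag's needle decomposition: it localizes along geodesics through the minimizer of $g$, reduces to a one-dimensional $CD(0,N)$ needle, further reduces to an \emph{affine} needle with density proportional to $(a_2+x/(N-1))^{N-1}$, and finally reinterprets the resulting weighted one-dimensional integral as an integral over a truncated cone in $\mathbb{R}^N$, where it invokes the known Euclidean bound of Kalai--Vempala/Belloni et al. Your approach bypasses localization entirely: you push forward by $g$, use the $CD(0,N)$ Brunn--Minkowski inequality together with convexity of $g$ and $K$ to obtain concavity of $h^{1/N}$ for $h(t)=\mathrm{vol}(\{g\le t\})$, and then finish with a single integration by parts and the elementary inequality $t\phi'(t)\le\phi(t)$ for nonnegative concave $\phi$.

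Two remarks on the trade-off. First, your method is shorter and more self-contained (no localization machinery, no appeal to external Euclidean results), and it actually yields the sharper constant $N$ rather than $N+1$; the example $g(x)=d(x^{*},x)$ on $\mathbb{R}^N$ shows $N$ is optimal. Second, the paper's route has the virtue that needle decomposition is a very general-purpose tool; once one has it, many functional inequalities reduce to one-dimensional statements in a uniform way, whereas your level-set argument is tailored to integrands that are functions of $g$ alone. The caveat you flag---making the Brunn--Minkowski step rigorous in the synthetic $CD(0,N)$ setting---is real but standard: for Riemannian manifolds it is the Cordero-Erausquin--McCann--Schmuckenschl\"ager inequality, and for metric measure spaces it is essentially the definition (Sturm, Lott--Villani), with the geodesic interpolants of sublevel sets staying in $K$ precisely by the assumed geodesic convexity of $K$ and $g$.
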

    \begin{proof}
    	For the sake of contradiction, assume that the inequality is false.
        Since $ g $ is a convex function, it has a unique minimizer $ x^{*} $ in $ K $. 
  		By assumption, we have $ g(x^{*}) = 0 $. 
        Consider the $1$-Lipschitz function $u$ defined by
        \begin{equation*}
        	  u(x) = d\left( x^{*} , x   \right). 
        \end{equation*}
        Note that the strain sets of $ u $ are geodesic segments through the minimizing point $ x^{*} $.
        Applying \autoref{thm:Klartag_Lips} with respect to $ u $, we get a decomposition of measure $ \eta_{I} $. 
        Note that the support of $ \eta_I $ is a geodesic passing through $x^{*}$. 
        From the decomposition, we get that for some needle $ \eta_j $, 
        \begin{equation*}
        	\int_{j} \left( e^{g} g   - \left(N+1\right) e^{g}   \right) d \eta_{j} \geq 0. 
        \end{equation*}
       	Note that $ \eta_j $ is a $CD\left(0,N\right)  $ needle.
        We now note the reduction to the affine case. 
        
        \begin{lemma}[Reduction to $ \kappa = 0 $ affine case, \cite{MR3709716}, \cite{MR2060645}] \label{cor:affine_needle_0}
            Let $ \mu  $ be a $ CD( 0, N) $ needle on $ \br $.
            Consider an integrable, continuous function $ f :  \br \to \br  $ satisfying $ \int_{M} f d \mu = 0 $. 
            Then, there exists a partition $ \Omega  $ of $ \br $ and a measure $ \nu  $ on $ \Omega  $ and a family of measure $ \left\{ \mu_i  \right\}_{i \in \Omega} $ such that 
            \begin{enumerate}
                \item For any measurable set $ A $, 
                \begin{equation*}
                \mu\left( A\right) = \int_{\Omega}  \mu_i \left( A\right) d \nu(i).
                \end{equation*}
                \item For almost every $ i \in \Omega $, either $ i $ is a singleton or $ \mu_i $ is a $ CD( 0, N) $ affine needle. 
                \item For almost every $ i $, 
                \begin{equation*}
                \int_i f d \mu_i = 0
                \end{equation*} 
            \end{enumerate}
        \end{lemma}
           
       	From the above lemma, we can take the needle to be an affine needle. 
       	The condition for $CD\left(0,N\right)  $, gives us that $ \eta_j  $ is the pushforward of $ e^{-\phi} $, where  
       	\begin{equation*}
       		\phi'' = \frac{\left(\phi'\right)^2}{N-1}. 
       	\end{equation*}
       	It is easy to check that the solutions to this equations are of the form
       	\begin{equation*}
       		\phi\left(x\right) = a_1 - \left(N-1\right) \log\left(a_2 + \frac{x}{N-1}  \right)
       	\end{equation*}
       	for some constants $ a_1 $ and $ a_2 $. Thus, we have 
       	\begin{equation*}
       		e^{-\phi(x)} = e^{-a_1} \left( a_2 + \frac{x}{N-1}    \right)^{N-1}. 
       	\end{equation*}
       	Thus, we get 
       	\begin{equation*}
       		\int_{J} \left( e^{g} g   - \left(N+1\right) e^{g}   \right) d \eta_{j} = \int_{\gamma^{-1} j }  e^{-a_1} \left( a_2 + \frac{x}{N-1}    \right)^{N-1} \left( e^ {g \left( \gamma(t) \right)  } g\left( \gamma(t) \right)     - \left(N+1\right) e^{g\left( \gamma(t) \right)  }   \right) d t
       	\end{equation*}
       	With appropriate change of variables, we get the integral to be 
       	\begin{equation*}
       		\int_a^b t^{N-1} \left( e^ {h(t)} h(t) - \left(N+1\right) e^{h(t)}   \right) d t. 
       	\end{equation*}
       	Here $ h $ is the convex function gotten from $ f $ after the affine change of variables.
       	Note that this is an integral over the real numbers, which can be dealt with using elementary tools. 
       	We capture this with the lemma below. 
       	\begin{lemma} \label{lem:Rn_KV}
       		Let $ h : [a,b] \to \br  $ be a convex function with minimum value zero.
       		Then, 
       		\begin{equation*}
       			\int_a^{b}  e^{-h(z)}h(z) z^{n-1} dz \leq \left(n+1\right) \int_a^b e^{-h(z)} z^{n-1} dz.
       		\end{equation*}
       	\end{lemma}
       \begin{proof}
       		Consider the truncated convex cone $ K_1 $  in $ \br^n $ with radii $ a $ and $ b $. Let $ x_1 $ denote the coordinate along the axis of the cone, and let the centres of the circles defining the cone be on the $ x_1 $ axis at $ x_1 = a $ and $ x_1 = b $ respectively. 
       		Consider the convex function $ c(x) = h(x_1) $. 
       		Note that 
       		\begin{equation*}
       			\int_{K_1} e^{-c(x)}c(x) dx = \int_a^{b}  e^{-h(z)}h(z) z^{n-1} dz .
       		\end{equation*}
       		and 
       		\begin{equation*}
       			\int_{K_1} e^{-c(x)}  dx = \int_a^b e^{-h(z)} z^{n-1} dz. 
       		\end{equation*}
       		The required result now follows from the results in \cite{kalai2006simulated} and \cite{belloni2015escaping}. 
       \end{proof}
       From \autoref{lem:Rn_KV}, we get a contradiction. 
       Thus, we have the required bound. 
    \end{proof}

    Now, we use the above theorem with $ N = n $, the dimension of the manifold to get the desired result. 
\end{proof}
	
	Given that we have shown that we can sample from a fixed temperature, we need to show that sampling from a distribution gives a warm start to the next distribution. 
	We do this by adapting the analysis of log-concave distributions to the manifold case using the localization lemma from \cite{MR3709716}. 
	
	\begin{lemma} [adaptation to manifolds of \cite{kalai2006simulated}, \cite{lovasz2003simulated}]
		Let $ K  $ be a strongly convex subset of $ M $ and $ f : K \to \br $ be a convex function. For $ a \geq 0 $, let
		\begin{equation*}
		Z(a) =  \int_{K}   e^{- a f\left(x\right)} dx  .
		\end{equation*}
		Then,
		\begin{equation*}
		Z(a)   Z(b) \leq  Z\left(  \frac{a+b }{2} \right )^2   \left(\frac{\left(a + b\right)^2 }{4ab}\right)^n. 
		\end{equation*} 
	\end{lemma}
	\begin{proof}
		We prove this inequality by using the four function theorem.
		Set $ f_1  =  e^{-a f }  $, $ f_2  =  e^{-b f }  $ and $ f_3 =   e^{- \frac{a+b}{2}f }  $ and $ f_4 =  \left(\frac{\left(a + b\right)^2 }{4ab}\right)^n  e^{- \frac{a+b}{2}f }   $. 
		Note that $ f_1f_2 \leq f_3 f_4  $ from the AM-GM inequality.
		Then, from \autoref{cor:affine_needle_0}, we have that the above theorem is true if the corresponding inequality is true for $ CD\left(0,N\right) $ affine needles. 
		From the above argument, we see that this reduces to showing 
		\begin{equation*}
			\int_{c}^{d} e^{-a h(x)  }  x^{n-1} dx \int_{c}^{d} e^{-b h(x)  } x^{n-1} dx \leq \left(\frac{\left(a + b\right)^2 }{4ab}\right)^n \left(\int_{a}^b e^{- \frac{a+b}{2}h(x) } dx   \right)^2.
		\end{equation*}
		Here $ h $ is the convex function obtained from $ f $ after the affine change of variables.
		The last inequality follows from the inequality in $ \br^n $. 
	\end{proof}
	
	\begin{theorem} \label{thm:adjacent_dist}
         Let $ T_i $ and $ T_{i+1} $ be adjacent temperatures in the algorithm. 
		Then, 
		\begin{equation*}
		\norm{ \pi_{g,T_i} / \pi_{g,T_{i+1}}   } \leq 5.
		\end{equation*}
	\end{theorem}
	\begin{proof}
		Denote by $ \beta_i $ the inverse of $ T_i $. From the definition of the norm, we get 
		\begin{align*}
		\norm{ \pi_{g,T_i} / \pi_{g,T_{i+1}}   } = &  \frac{Z( \beta_{i+1}  )}{ Z(\beta_i) ^2 } \int e^{ - g(x) \beta_i }  e^{  \beta_{i+1}  g(x)  } e^{ - g(x)  \beta_{i} } dx \\
		= &  \frac{Z( \beta_{i+1}  )  Z( 2 \beta_{i} - \beta_{i+1} )     }{ Z(\beta_i) ^2 } . 
		\intertext{We use the fact that $ a^n Z(a) $ is log-concave to get,   }
		\norm{ \pi_{g,T_i} / \pi_{g,T_{i+1}}   } \leq & \left(\frac{ \beta_{i} ^2 }{  \left(2\beta_i - \beta_{i+1} \right)  \beta_{i+1}  } \right)^n . \\
		\intertext{By the choice of the temperature schedule, we get, }
		\norm{ \pi_{g,T_i} / \pi_{g,T_{i+1}}   } \leq & \left( \frac{\left( 1 - 1/ \sqrt{n}  \right)^2   }{2\left( 1 - 1/\sqrt{n}  \right)  -1 }   \right)^n \\
		\leq & \left( 1 +   \frac{1}{n   -  2\sqrt{n} }   \right)^n \\
		\leq & e^{ n/(n -2 \sqrt{n}  ) } \\
		\leq & 5.  
		\end{align*}
	\end{proof}

        Given the above theorem, we write down the running time bound for the simulated annealing algorithm. 
        
    	\simulatedannealing*
        \begin{proof}
            We start with a temperature such that the uniform distribution $ \nu $ on the convex set satisfies $ \norm{ \nu / \mu_0  }  \leq c_5 $ for some constant $ c_5 $. 
            From the choice of temperature schedule, we get that after $ I =  \sqrt{n} \log\left( \frac{T_0 \left(n+1\right) }{\epsilon \delta}   \right)  $ rounds of cooling, we are at the temperature $ \frac{\epsilon \delta }{\left(n+1\right)  } $. 
            Then, from \autoref{thm:low_temperature_expectation}, we get 
            \begin{equation*}
                \mathbb{E} \left[  f\left(z\right)  \right] \leq \min_{x} f(x) + \epsilon \delta . 
            \end{equation*}
            Using Markov inequality, we have that with probability $ 1 - \delta $, we are within $ \epsilon $ of the minimum value.  
            In each phase $ i $, we sample such that the distribution we sample from is at most $ \frac{\delta}{100I} $ in total variation. 
            From \autoref{thm:sampling_logconcave}, this requires 
            \begin{equation*}
            	L_i =  O \left( \frac{H^2 I^2 m_{T_i}^2 n^3 \left(1+R\right) L^2}{r^2 T_i^2 \delta^2} \log \left( \frac{HI}{\delta}    \right) \right)  
            \end{equation*}
            steps starting from a $ H $-warm start. 
            From  \autoref{thm:adjacent_dist}, we can take each adjacent distribution have  $ \norm{ \pi_{g,T_i} / \pi_{g,T_{i+1}}   }  \leq 5 $. 
            Using a standard trick (see for example, Corollary 3.5 in \cite{vempala2005geometric}), we can assume that this is a $ 2000I/\delta $-warm start with probability $ 1 - \delta/2000I $ and with probability $ \delta/2000I $, it is sampled from an arbitrary distribution. 
            This gives us 
            \begin{equation*}
            		L_i =  O \left( \frac{ I^4 m_{T_i}^2 n^3 \left(1+R\right) L^2}{r^2 T_i^2 \delta^4} \log \left( \frac{I}{\delta}  \right) \right)  
            \end{equation*}
            Note that $ m_{T_{i}}  \leq D $, where $ D $ is the diameter of the set $ K $ and $ T_i \geq \frac{\epsilon \delta}{2n} $, which gives us 
            \begin{equation*}
             L_i =  O \left( \frac{D^2 n^7 \left(1+R\right) L^2}{r^2 \epsilon^2 \delta^6} \log \left( \frac{n}{\delta} \log\left( \frac{T_0 \left(n+1\right) }{\epsilon \delta}   \right)        \right)    \log^4\left( \frac{T_0 n }{\epsilon \delta}   \right)    \right)   
            \end{equation*}
            as required. 
        \end{proof}

    
     \newpage 
    
    \bibliographystyle{alpha}
    \bibliography{Manifold_Sampling_Arxiv}
    
    \newpage
    \appendix

\end{document}